\author{Rafael Torres}
\title[Smooth structures, Gluck twists, and involutions]{Smooth structures on non-orientable four-manifolds and free involutions}
\address{Mathematical Institute - University of Oxford, Andrew Wiles Building\\Radcliffe Observatory Quarter, Woodstock Road\\Oxford\\OX2 6GG\\England}
\address{Scuola Internazionale Superiori di Studi Avanzati\\ Via Bonomea 265\\34136\\Trieste\\Italy}
\email{rtorres@sissa.it}
\keywords{Exotic smooth structure, Gluck twist, involutions, Eta invariant.}
\subjclass[2010]{Primary 57R55, ; Secondary 57M60}
\theoremstyle{plain}
\newtheorem{theorem}{Theorem}
\newtheorem{corollary}{Corollary}
\newtheorem{proposition}{Proposition}
\newtheorem{lemma}{Lemma}
\newtheorem{remark}[equation]{Remark}
\newtheorem{convention}{Convention}
\theoremstyle{definition}
\newtheorem{definition}{Definition}
\newcommand{\Q}{\mathbb{Q}}
\newcommand{\R}{\mathbb{R}}
\newcommand{\Z}{\mathbb{Z}}
\newcommand{\N}{\mathbb{N}}
\newcommand{\C}{\mathbb{C}}
\begin{document}

\maketitle

\emph{Abstract}: In this paper, we investigate existence of inequivalent smooth structures on closed smooth non-orientable 4-manifolds building upon results of Akbulut, Cappell-Shaneson, Fintushel-Stern, Gompf, and Stolz. We add to the number of known constructions and provide new examples of exotic manifolds that are obtained as an application of Gluck twists to the standard smooth structure. Inspection of the smooth structure on the oriented 2-covers yields existence results of orientation-reversing exotic free involutions.


\section{Introduction and main results}

Two smooth manifolds $X$ and $Y$ that represent two different diffeomorphism classes within a same homeomorphism class are called inequivalent smooth structures. Fixing the homeomorphism type of $X$, the manifold $Y$ is called an exotic copy of $X$ and/or an exotic smooth structure on $X$, and the smooth structure on $X$ is said to be standard. The first example of such a smooth structure in dimension four was constructed on the topological type of the real projective 4-space in \cite{[CS]}, where a collection of manifolds that are simply homotopy equivalent but not smoothly s-cobordant to $\R P^4$ was constructed (cf. Section \ref{Section FRP}). Another inequivalent smooth structure on $\R P^4$ was constructed in \cite{[FS]}. The smooth structures remain inequivalent under connected sums with an arbitrary number of copies of $S^2\times S^2$ \cite{[CS], [FS]}, yet they become diffeomorphic after forming the connected sum with a single copy of the complex projective space $\mathbb{CP}^2$ \cite{[AO]}.\\

The papers \cite{[CS], [FS]} use topological invariants to distinguish the smooth structures. Following a suggestion in \cite{[Gi]}, a spectral invariant known as the $\eta$-invariant was used in \cite{[S]} to descry the smooth structure built in \cite{[CS]} from the standard one in the homeomorphism class of $\R P^4$. The $\eta$-invariant mod $\Z$ is a homeomorphism invariant \cite{[HKT]} and its value mod $2\Z$ completely determines $Pin^+$-bordism classes in $\Omega^{Pin^+}_4$ \cite{[S]} . The $\eta$-invariant of the exotic $\R P^4$ that was constructed in \cite{[FS]} was computed in \cite{[O]}.\\

We observe that coupling results in \cite{[CS], [AO], [S]} yields the following theorem. We denote the connected sum of two manifolds $M_1$ and $M_2$ by $M_1\# M_2$, the circle sum along the loop that represents the generator of the fundamental group by $M_1\#_{S^1} M_2$ (see Definition \ref{Definition CS}) , and the real Hopf bundle over $\R P^2$ by $\gamma$.

\begin{theorem}{\label{Theorem 0}} There are exactly two inequivalent smooth structures on the non-orientable 4-manifolds
\begin{center}
$S(2\gamma \oplus \R)\#(k - 1)(S^2\times S^2)$ and $\#_{S^1} r\cdot (\R P^4) \# (k - 1)(S^2\times S^2)$
\end{center}
for $1\leq r \leq 3$ and any $k\in \N$ that are discerned by the $\eta$-invariant.

The corresponding manifolds become diffeomorphic after taking a connected sum with $\mathbb{CP}^2$.
\end{theorem}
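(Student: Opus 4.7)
The plan is to combine three results already present in the literature: the exotic $\R P^4$ construction of \cite{[CS]}, the dissolution theorem of \cite{[AO]}, and the $\eta$-invariant analysis of \cite{[S]}.

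First I would exhibit two inequivalent smooth structures on each manifold in the family. By \cite{[CS]} there is a smooth 4-manifold $Q$ homeomorphic but not diffeomorphic to $\R P^4$, and this distinction persists after $\# k (S^2 \times S^2)$. Performing the Cappell-Shaneson construction on one of the $\R P^4$ summands appearing in $\#_{S^1} r\cdot\R P^4\#(k-1)(S^2\times S^2)$, and identifying $S(2\gamma\oplus\R)$ with the appropriate small-$r$ circle sum of $\R P^4$'s (a computation with the non-orientable handlebody presentations supplied by Definition \ref{Definition CS}), produces an exotic smooth structure on each manifold in the list.

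To verify that these are the \emph{only} two smooth structures, I would use the $\eta$-invariant. By \cite{[S]} one has $\eta(Q)\not\equiv\eta(\R P^4)\pmod{\Z}$, and since $S^2\times S^2$ is $Pin^+$-null-bordant the distinction extends to every $k$; by \cite{[HKT]} the reduction of $\eta$ modulo $\Z$ is a homeomorphism invariant, so the two structures remain smoothly inequivalent inside the fixed topological type. For the upper bound of two, I would invoke Stolz's classification \cite{[S]} of $\Omega^{Pin^+}_4\cong\Z/16$ by $\eta\pmod{2\Z}$: the fixed homeomorphism type constrains the admissible $Pin^+$-bordism classes to the two realized by the standard and the Cappell-Shaneson smoothings.

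Finally, the dissolution statement follows from \cite{[AO]}: the Cappell-Shaneson exotic $\R P^4$ becomes diffeomorphic to the standard one after a single connected sum with $\mathbb{CP}^2$. Connect-summing both sides with $(k-1)(S^2\times S^2)$ and circle-summing with the remaining $\R P^4$ factors preserves this diffeomorphism, giving the stated conclusion. The main obstacle I anticipate is the ``exactly two'' count: one must rule out a third smooth structure (for instance, one coming from the alternative construction of \cite{[FS]}) by checking that every known exotic smoothing realizes the \emph{same} $\eta$-class modulo $\Z$ and that no further $Pin^+$-bordism class is compatible with the fixed homeomorphism type. The rest of the argument is essentially bookkeeping with additivity of $\eta$ under (circle) connected sums with $Pin^+$-null-bordant pieces.
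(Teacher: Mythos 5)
Your overall strategy matches the paper's: construct the exotic structure by a circle sum with the Cappell--Shaneson mapping torus (equivalently, by replacing one $\R P^4$ factor with $Q$), detect it via $\eta$, and use \cite{[AO]} for the dissolution after $\#\,\mathbb{CP}^2$. However, there is a real error in your modular arithmetic that breaks the inequivalence argument as written. You assert $\eta(Q)\not\equiv\eta(\R P^4)\pmod{\Z}$ and in the same breath cite \cite{[HKT]} for the fact that $\eta\bmod\Z$ is a \emph{homeomorphism} invariant. Taken together these two claims would imply that $Q$ and $\R P^4$ are not homeomorphic, which is false. The correct situation (Theorem \ref{Theorem Stolz}: $\eta(\R P^4,\phi)=\pm 1/8$, $\eta(Q,\phi')=\pm 7/8$, both mod $2\Z$) is that $Q$ and $\R P^4$ have the \emph{same} set of values of $\eta\bmod\Z$, precisely because they are homeomorphic, but differ mod $2\Z$; since $\eta\bmod 2\Z$ is a $Pin^+$-bordism (hence diffeomorphism) invariant \cite[Prop.~4.3]{[S]}, this is what rules out a diffeomorphism. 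You need the $2\Z$ reduction for the smooth distinction, not the $\Z$ reduction.

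You are also overreading the theorem statement. It does not claim that each manifold carries \emph{only} two smooth structures; it claims that the $\eta$-invariant discerns exactly two. The ``at most two'' half is handled cleanly by Proposition \ref{Proposition Limits}: homeomorphic $Pin^+$-manifolds with $\pi_1\cong\Z/2$ have $\eta$ values differing by an integer, so $\eta\bmod 2\Z$ can separate at most two classes --- there is no need to rule out a third smooth structure coming from \cite{[FS]} or elsewhere. (In fact, the $\eta$-invariant cannot see a third one, and the question of whether one exists is left open.) With that reframing, and after swapping $\Z$ for $2\Z$ where appropriate, your argument aligns with the paper's; you should also make explicit that the construction for general $r$ identifies $(\#_{S^1}\,r\cdot\R P^4)\#_{S^1}M_A$ with $(\#_{S^1}(r-1)\cdot\R P^4)\#_{S^1}Q$ by associativity of the circle sum, since that identification is what allows the \cite{[AO]} decomposition $Q=N\cup D^3\widetilde{\times}S^1$ with $N\#\mathbb{CP}^2\cong D^2\widetilde{\times}\R P^2\#\mathbb{CP}^2$ to be applied to one summand and propagated by induction.
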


Handlebodies of the smooth non-orientable 4-manifolds of Theorem \ref{Theorem 0} are constructed in Section \ref{Section Hand} following the analysis done in \cite{[AO], [AL]}. The respective calculations and limitations of the $\eta$-invariant are studied in Section \ref{Section eta1} and Section \ref{Section eta2}.\\

The inequivalent smooth structure on $\R P^4$ of \cite{[CS]} was used to build an exotic copy of $S^3\widetilde{\times} S^1 \# S^2\times S^2$ in \cite{[AO]} (cf. \cite{[FS1]}), where $S^3\widetilde{\times} S^1$ denotes the non-orientable 3-sphere bundle over the circle. Denote this exotic smooth structure by $A$. Studying the handlebody of $A$ \cite[Figure 7]{[AT]}, it was shown in \cite{[AT]} that this exotic manifold is obtained by applying a Gluck twist \cite{[Gl]} along an embedded 2-sphere in $S^3\widetilde{\times} S^1 \# S^2\times S^2$. This was the only known example where Gluck twists do produce an exotic manifold (cf. \cite{[G0]}).\\

We compute the $\eta$-invariant of both smooth structures on the homeomorphism class of $S^3\widetilde{\times} S^1 \# S^2\times S^2$, and then use the exotic manifold $A$ to build more exotic manifolds as circle sums; the smooth structures that we construct are distinguished from the standard ones by the $\eta$-invariant. The handlebody of $A$ \cite{[AT]} allows us to conclude that the exotic copies that are obtained in this way can be described as a result of performing Gluck twists to the standard smooth structure. We thus obtain the following result.

\begin{theorem}{\label{Theorem Pr}} Let $X$ be a closed smooth non-orientable 4-manifold with a $Pin^+$-structure $\phi_X$. There exists a manifold $Y$ homeomorphic to $X\# S^2\times S^2$ with a $Pin^+$-structure $\phi_Y$ such that
\begin{center}
$\eta(Y, g_Y, \phi_Y) = \eta(X, g_X, \phi_X) \pm 1$ mod $2\Z$
\end{center}

for all Riemannian metrics $g_Y$ and $g_X$ on $Y$ and $X$ respectively.

Moreover, there is an embedding $f: S^2\hookrightarrow X\# S^2\times S^2$ such that the manifold $Y$ is obtained by performing a Gluck twist to $X\# S^2\times S^2$ along $f(S^2)$.
\end{theorem}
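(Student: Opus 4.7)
The plan is to build $Y$ as a circle sum of $X$ with the Akbulut exotic manifold $A$ of \cite{[AT]}, which already realizes a $\pm 1 \mod 2\Z$ shift of $\eta$ via a single Gluck twist in the base case $X = S^3\widetilde{\times} S^1$. First I would compute $\eta$ for both the standard smooth structure on $S^3\widetilde{\times} S^1 \# S^2\times S^2$ and for $A$, using Stolz's $Pin^+$-bordism interpretation \cite{[S]} together with the explicit handlebody picture of $A$ in \cite{[AT]}, to verify $\eta(A, g_A, \phi_A) = \eta(S^3\widetilde{\times} S^1 \# S^2\times S^2, g, \phi) \pm 1 \mod 2\Z$ for suitable $Pin^+$-structures. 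This is essentially the base computation that drives the whole theorem.

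Next I would define $Y := X \#_{S^1} A$, where the circle sum is performed along loops representing the non-trivial orientation character on each side. Since $S^3\widetilde{\times} S^1$ is an identity element for $\#_{S^1}$ in the non-orientable category, the standard circle sum $X \#_{S^1} (S^3\widetilde{\times} S^1 \# S^2\times S^2)$ is diffeomorphic to $X \# S^2\times S^2$; because $A$ is homeomorphic to $S^3\widetilde{\times} S^1 \# S^2\times S^2$, the manifold $Y$ is homeomorphic to $X \# S^2\times S^2$. The $Pin^+$-structure $\phi_X$ extends across the glued $S^1\times D^3$ to a $Pin^+$-structure $\phi_Y$ compatible with a chosen $\phi_A$, and an additivity argument for $\eta$ under circle sum (with the gluing tube contributing trivially mod $2\Z$) reduces the desired identity to the base computation carried out in the first step.

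For the Gluck-twist assertion, the handlebody analysis in \cite{[AT]} pinpoints an embedded 2-sphere $\Sigma \subset S^3\widetilde{\times} S^1 \# S^2\times S^2$ along which a Gluck twist produces $A$. The circle sum can be performed in a tubular neighborhood of the loop disjoint from $\Sigma$, so $\Sigma$ survives as an embedded 2-sphere $f(S^2) \subset X \# S^2\times S^2$, and because the Gluck twist is a local operation in a tubular neighborhood of the sphere, commuting it with the circle sum identifies $Y$ with the Gluck twist of $X \# S^2\times S^2$ along $f(S^2)$.

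The hard part is controlling the $Pin^+$-data across the circle sum well enough to obtain exactly a $\pm 1 \mod 2\Z$ shift of $\eta$ (rather than an unknown integer correction), since both the bordism class of the gluing region and the $Pin^+$-structure choices must line up; this is where the mod-$2\Z$ $Pin^+$-bordism invariance result of \cite{[S]} combined with the homeomorphism invariance of $\eta$ mod $\Z$ of \cite{[HKT]} does the essential work.
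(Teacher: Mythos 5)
Your proposal is correct and matches the paper's proof: both define $Y := X \#_{S^1} A$, deduce the homeomorphism type and the $\pm 1$ shift of $\eta$ from the base computation $\eta(A,\pm\phi_A) = \pm 1 \bmod 2\Z$ together with additivity of $\eta$ under circle sums, and then realize $Y$ as a Gluck twist of $X \# S^2\times S^2$ by locating the twisting sphere away from the circle-sum region. The only cosmetic difference is that the paper verifies the Gluck-twist claim via an explicit handlebody of $Y$ (turning the handlebody of $X - D^3\widetilde{\times}S^1$ upside down and attaching it to $\partial A_0$, so that the dotted slice knot and $-1$-framed $2$-handle from \cite{[AT]} persist), whereas you argue more abstractly from locality of the Gluck twist and disjointness of $\Sigma$ from the circle-sum loop; also note that $A$ itself was constructed in \cite{[AO]}, while \cite{[AT]} is where its Gluck-twist description was established.
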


Theorem \ref{Theorem Pr} by itself is not enough to conclude the existence of an inequivalent smooth structure, since in such a scenario all values of the $\eta$-invariant are required to be different for all $Pin^+$-structures to distinguish the smooth structures. Examples of exotic 4-manifolds obtained by the procedure of Theorem \ref{Theorem Pr} are provided in our next theorem, where we denote the Klein bottle by $Kb$, and the non-orientable total space of the non-trivial  $S^2$-bundle over $Kb$ with vanishing second Stiefel-Whitney class by $\xi_3$.

\begin{theorem}{\label{Theorem M}} Suppose $k\in \N$, and let $X_i$ be a smooth 4-manifold of Theorem \ref{Theorem 0}. The manifolds\smallskip
\begin{center}\begin{enumerate}

\item $S^3\widetilde{\times} S^1 \# k(S^2\times S^2)$\smallskip\smallskip


\item $\xi_5 \# k(S^2\times S^2) = Kb\times S^2\# k(S^2\times S^2)$\smallskip\smallskip

\item $\xi_3 \# k(S^2\times S^2)$ \smallskip\smallskip

\item $Kb\times T^2$ $\# k(S^2\times S^2)$ \smallskip\smallskip

\item  $X_1\# X_2$

\end{enumerate}\end{center}
admit an inequivalent smooth structure that is detected by the $\eta$-invariant.

The exotic smooth structure of Items (1) - (4) is obtained by performing a Gluck twist along an embedded 2-sphere to the standard smooth structure. 

\end{theorem}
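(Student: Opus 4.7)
The plan is to handle items (1)--(4) by a direct application of Theorem \ref{Theorem Pr} and to obtain item (5) from Theorem \ref{Theorem 0} via additivity of the $\eta$-invariant under connected sum.

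For each of (1)--(4), I would first write the listed manifold $M$ as $X \# S^2 \times S^2$ by peeling off a single $S^2 \times S^2$ summand; for instance $X = S^3\widetilde{\times}S^1 \# (k-1)(S^2\times S^2)$ for item (1), and analogously for the others. Every such base $X$ admits a $Pin^+$-structure $\phi_X$: the building blocks $S^3\widetilde{\times}S^1$, $Kb \times S^2 = \xi_5$, $\xi_3$, and $Kb \times T^2$ all satisfy $w_2 + w_1^2 = 0$, and $S^2\times S^2$ is spin. Theorem \ref{Theorem Pr} then produces a manifold $Y$ homeomorphic to $M$, obtained from the standard smooth structure by a Gluck twist on an embedded 2-sphere, together with a $Pin^+$-structure $\phi_Y$ such that $\eta(Y, \phi_Y) = \eta(X, \phi_X) \pm 1 \bmod 2\Z$.

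To convert this identity into an inequivalence of smooth structures, I would compute the set $T \subset \Z/16$ of $Pin^+$-bordism classes realised on the fixed topological manifold $M$ as $\phi$ varies over the $H^1(M;\Z/2)$-torsor of $Pin^+$-structures, and verify that $T \cap (T \pm 1) = \emptyset$ in each case. Each of the four bases is a $Pin^+$-boundary for some natural $\phi_0$ (for example $Kb \times S^2$ bounds $Kb \times D^3$, and $S^3\widetilde{\times}S^1$ bounds the non-orientable $D^4$-bundle over $S^1$), so $0 \in T$; the remaining elements of $T$ come from the image of the change-of-$Pin^+$-structure map, whose values can be read off from the Stiefel--Whitney classes of $M$ via the Kirby--Taylor formula. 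Since the change-of-structure formula is topological, the set of $\eta$-values realised on $Y$ is exactly $T \pm 1$, and the smooth structures are distinguished by $\eta$ provided $T$ avoids the class $8 \in \Z/16$.

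For item (5), I would take $X_1^{\mathrm{exo}}$ to be the exotic smooth structure on $X_1$ produced by Theorem \ref{Theorem 0} and compare $X_1^{\mathrm{exo}} \# X_2$ with the standard $X_1 \# X_2$. The isomorphism $\Omega_4^{Pin^+} \cong \Z/16$ given by $\eta \bmod 2\Z$ is additive under connected sum, so the gap in $\eta$-values between the two smooth structures on $X_1$ furnished by Theorem \ref{Theorem 0} persists after summing with $X_2$, yielding the inequivalence. The hardest step in the plan is the bordism calculation in the preceding paragraph: verifying case by case, using the handlebodies of Section \ref{Section Hand}, that the orbit of $\eta$-values on the standard smooth structure of each of (1)--(4) avoids $8 \in \Z/16$, so that the $\pm 1$ shift of Theorem \ref{Theorem Pr} cannot be absorbed by a change of $Pin^+$-structure.
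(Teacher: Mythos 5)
Your high-level plan for Items (1)--(4) matches the paper's: construct the exotic manifold as a circle sum with $A$ via Theorem \ref{Theorem Pr}, note that this realizes the Gluck twist, and then argue that the $\eta$-invariant separates the two smooth structures by comparing the full sets of $\eta$-values as the $Pin^+$-structure varies. Two concrete points, though.

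First, the step you call ``the hardest'' --- determining the set $T$ of $\eta$-classes realized on the standard smooth structure of each base --- is not merely hard; it is the substance of the proof, and the paper does carry it out (Lemma \ref{Lemma Values}). For $Kb\times S^2$, $\xi_3$, and $Kb\times T^2$ the paper produces explicit null $Pin^+$-bordisms for most of the $Pin^+$-structures (e.g.\ $Kb\times D^3$, $D^2\widetilde\times S^1\times T^2$) and, where a structure does not obviously bound, it invokes the $\Z/2$-equivariant $Pin^+$-structure machinery of Definition \ref{Definition Equiv} and Proposition \ref{Proposition Inv} to compute $\eta$ directly via fixed-point indices. You propose instead to ``read off'' the remaining $\eta$-values from the Kirby--Taylor change-of-structure formula and the Stiefel--Whitney classes; this is a plausible alternative but it is not what the paper does, and you would need to verify that the change-of-structure contribution actually vanishes in each case rather than assuming it. In all four items the answer is $T=\{0\}$, which is why the $\pm 1$ shift from Theorem \ref{Theorem Pr} suffices, but this is an output of the computation, not an input.

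Second, your treatment of Item (5) has a genuine gap. Additivity of $\eta$ under connected sum tells you that the $\eta$-value sets satisfy $T(X_1^{\mathrm{exo}}\#X_2)=T(X_1\#X_2)+8$ in $\Z/16$, but it does \emph{not} by itself guarantee that this set differs from $T(X_1\#X_2)$: the shift by $8$ could in principle be absorbed by a change of $Pin^+$-structure on the $X_2$-summand. Indeed, for pairs such as $X_1=\R P^4$, $X_2=\#_{S^1}3\cdot\R P^4$, Proposition \ref{Proposition Comp} gives $T=\{2,4,12,14\}$ and $T+8=\{4,6,10,12\}$, so $T\cap(T+8)\neq\emptyset$; your stated criterion $T\cap(T\pm1)=\emptyset$ fails here. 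The correct criterion is the weaker $T\neq T+8$ (which does hold in every case coming from Theorem \ref{Theorem 0}), and establishing it requires running through the finitely many value-patterns supplied by Proposition \ref{Proposition Values} and Proposition \ref{Proposition Comp} rather than appealing to ``the gap persists.'' So while your conclusion is right, the proposed justification for Item (5) is too coarse and would not survive as written.
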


Theorem \ref{Theorem Pr} has the following advantages over the known methods to construct exotic non-orientable 4-manifolds \cite{[CS], [FS1], [K], [AO]}. Unlike the cut-and-paste procedure of \cite{[CS]} (See Section \ref{Section FRP}), it does not require the existence of an element of order two in the fundamental group of the manifold for it to be effective, albeit increasing the Euler characteristic by two. The construction in \cite{[K]} increases the Euler characteristic by twenty-two. Moreover, Theorem \ref{Theorem Pr} also yields more examples where Gluck twisting a smooth manifold does produce inequivalent smooth structures \cite{[AO], [AL]}. Theorem \ref{Theorem 0} is proven in Section \ref{Section Exotica}.

\begin{corollary}{\label{Corollary M}} \cite{[AO], [W]}. Every closed smooth non-orientable 4-manifold with infinite cyclic fundamental group, Euler characteristic at least six, and a $Pin^+$-structure admits an inequivalent smooth structure that is detected by the $\eta$-invariant.
\end{corollary}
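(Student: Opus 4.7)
The plan is to reduce the statement to a single application of Theorem~\ref{Theorem Pr} by placing the homeomorphism type of the given manifold in a standard form. Suppose $Z$ is a closed smooth non-orientable 4-manifold with $\pi_1(Z) \cong \Z$, a $Pin^+$-structure, and $\chi(Z) = 2k \geq 6$. First, I would invoke the surgery-theoretic classification of closed topological non-orientable 4-manifolds with infinite cyclic fundamental group admitting a $Pin^+$-structure (Wall, as sharpened by Hambleton-Kreck-Teichner) to exhibit a homeomorphism
\[
Z \ \cong \ X \# (S^2 \times S^2),
\]
where $X$ is a standard smooth model---for instance, $S^3 \widetilde{\times} S^1 \# (k-1)(S^2 \times S^2)$---equipped with its natural $Pin^+$-structure $\phi_X$. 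The Euler characteristic hypothesis $\chi(Z) \geq 6$ is what guarantees enough slack to split off at least one topological $S^2 \times S^2$-summand.

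With the topological model in place, I would then apply Theorem~\ref{Theorem Pr} to $(X, \phi_X)$. This produces a closed smooth non-orientable 4-manifold $Y$ with a $Pin^+$-structure $\phi_Y$, homeomorphic to $X \# (S^2 \times S^2)$ and hence to $Z$, and satisfying
\[
\eta(Y, g_Y, \phi_Y) = \eta(X, g_X, \phi_X) \pm 1 \pmod{2\Z}
\]
for all Riemannian metrics $g_X$ and $g_Y$. Because $\eta$ mod $\Z$ is a homeomorphism invariant while its value mod $2\Z$ detects the smooth structure, the offset of $\pm 1$ is compatible with $Y \cong Z$ topologically yet forces the smooth structures of $Y$ and of the standard $X \# (S^2 \times S^2)$ to be distinct. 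Consequently the topological type underlying $Z$ carries at least two inequivalent smooth structures detected by $\eta$, and so $Z$ admits an inequivalent smooth structure in the stated sense.

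The main obstacle is the topological splitting $Z \cong X \# (S^2 \times S^2)$ in the first step, since smooth surgery is in general not available in dimension four for infinite fundamental groups. I would rely entirely on the classification machinery---Freedman-Quinn topological surgery combined with the Kreck and Hambleton-Kreck-Teichner analysis for $\pi_1 \cong \Z$ with non-trivial orientation character---rather than attempt to exhibit an $S^2 \times S^2$-summand by direct handle manipulation on $Z$. Once the standard topological form is achieved, producing the exotic smooth structure via a Gluck twist and distinguishing it with the $\eta$-invariant follow immediately from Theorem~\ref{Theorem Pr}.
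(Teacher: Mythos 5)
Your overall route matches the paper's: use the topological classification of non-orientable 4-manifolds with infinite cyclic fundamental group to identify $Z$ with the standard model $S^3\widetilde{\times} S^1 \# k(S^2\times S^2)$, and then produce an exotic copy via the construction of Theorem~\ref{Theorem Pr} (equivalently, the Akbulut manifold $A_k$ of~\cite{[AO]}). However, there is a gap that the paper itself explicitly flags just after stating Theorem~\ref{Theorem Pr}: that theorem by itself is not enough to conclude the existence of an inequivalent smooth structure, because the equality $\eta(Y,\phi_Y) = \eta(X,\phi_X) \pm 1$ holds for a single $Pin^+$-structure on $Y$, while a hypothetical diffeomorphism between $Y$ and the standard $X\# S^2\times S^2$ could carry $\phi_Y$ to the \emph{other} $Pin^+$-structure. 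You must therefore verify that the full set of $\eta$-values over all $Pin^+$-structures on $Y$ is disjoint from the corresponding set for the standard model. This is exactly what Items~\eqref{Item 3} and~\eqref{Item 4} of Proposition~\ref{Proposition Values} supply: $\eta(S^3\widetilde{\times} S^1 \# k(S^2\times S^2), \pm\phi) = 0$ for both structures, while the exotic copy has $\eta = \pm 1 \equiv 1 \pmod{2\Z}$ for both, so the two sets $\{0\}$ and $\{1\}$ are disjoint. Your proposal asserts that the offset of $\pm 1$ ``forces'' the structures to be distinct; it does not, without this check.

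Two smaller points. The classification input for $\pi_1 \cong \Z$ with non-trivial orientation character is Wang~\cite{[W]} (stable homeomorphism to $S^3\widetilde{\times}S^1 \# r(S^2\times S^2)$) combined with Khan~\cite{[QK]} (cancellation once $r \geq 3$); Hambleton--Kreck--Teichner~\cite{[HKT]} treats $\pi_1 \cong \Z/2$, not $\Z$, so that attribution should be corrected. Relatedly, the hypothesis $\chi(Z) \geq 6$ is not about having ``enough slack to split off one $S^2\times S^2$-summand''; since $\chi(S^3\widetilde{\times}S^1 \# k(S^2\times S^2)) = 2k$, it is precisely the condition $k \geq 3$ needed to invoke Khan's cancellation theorem and pin down the homeomorphism type of $Z$ as the standard model.
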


It is natural to study the smooth structures on the orientation 2-covers of the exotic manifolds that are constructed. The universal cover of an exotic $\R P^4$ was shown to be standard in \cite{[FS]}. This was the first known example of a pair of orientation-reversing free involutions whose orbit spaces are homeomorphic, yet not diffeomorphic. It was shown in \cite{[G0]} that the universal cover of an exotic real projective 4-space built in \cite{[CS]} is standard, hence providing another such example (see \cite{[A], [G]} as well). Since the inequivalent smooth structures on $\R P^4$ are invariant under connected sums with any number of copies of $S^2\times S^2$, these results yield examples of such involutions on the connected sums of an even number of $S^2\times S^2$.\\

Our next theorem extends these results by showing that the smooth structure is the standard one on the orientation 2-covers of the manifolds of Theorem \ref{Theorem 0} and Theorem \ref{Theorem M} by building upon \cite{[G0], [FS]}. We will use the following definition to state it. We are indebted to Danny Ruberman for its motivation.

\begin{definition}{\label{Definition Inv}} Let $A_G$ and $B_G$ be closed smooth n-manifolds with fundamental group $\pi_1(A_G) = G = \pi_1(B_G)$, let $\varphi_{A_G}$ and $\varphi_{B_G}$ be involutions
\begin{center}
$C\overset{\varphi_{A_G}}\longrightarrow A_G$ and $C\overset{\varphi_{B_G}}\longrightarrow B_G$,

\end{center}

where $C$ is the orientation 2-cover. The involution \emph{$\varphi_A$ is an exotic copy of $\varphi_B$} if there exists a $G$-equivariant homeomorphism $A_g\rightarrow B_G$, but no such diffeomorphism. When such an exotic involution exists, we will say that $C$ admits an \emph{exotic/inequivalent $G$-involution} or an exotic/inequivalent involution by $G$.

\end{definition}

Through out the manuscript, the smooth manifolds $A_G$ and $B_G$ are non-orientable and four-dimensional, and the involutions $\varphi_{A_G}$ and $\varphi_{B_G}$ are free and orientation-reversing. Under Definition \ref{Definition Inv}, hence,\cite{[FS], [CS], [G]} constructed free orientation-reversing exotic $\Z/2$-involutions on $(2k - 2)(S^2\times S^2)\#S^4$ for every $k\in \N$.  A free orientation-reversing exotic $\Z$-involution on $S^3\times S^1\#S^2\times S^2\# S^2\times S^2$ was constructed in \cite{[FS1]}. We obtained the following extension in this paper. 

\begin{theorem}{\label{Theorem Inv}} Let $k\in \N$. The connected sums \smallskip
\begin{enumerate}
\item $(k - 1)(S^2\times S^2)\# S^4$\smallskip\smallskip

\item $S^3\times S^1 \# 2k(S^2\times S^2)$,\smallskip\smallskip

\item $T^2\times S^2 \# 2k(S^2\times S^2)$, \smallskip\smallskip

\item $T^2\times T^2 \# 2k(S^2\times S^2)$, and \smallskip\smallskip

\item $S^3\times S^1 \# 2(k - 1)(S^2\times S^2)$\smallskip
\end{enumerate}

admit free orientation-reversing exotic $G$-involutions by the groups $\Z/2$, $\Z$, $\Z \rtimes \Z$, $\Z\times \Z \times \Z \rtimes \Z$, and $\Z/2\ast \Z/2$, respectively.

\end{theorem}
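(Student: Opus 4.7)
The plan is to produce the required free orientation-reversing exotic $G$-involutions as the deck involutions of the orientation double covers of the pairs of homeomorphic, non-diffeomorphic non-orientable 4-manifolds supplied by Theorems \ref{Theorem 0} and \ref{Theorem M}. For each item of the theorem, I would choose a standard smooth structure $X$ and an exotic copy $Y$ with $\pi_1(X) = G = \pi_1(Y)$ whose common orientation double cover is the orientable 4-manifold listed. The deck involutions $\varphi_X$ and $\varphi_Y$ are free and orientation-reversing by construction. The homeomorphism $X\to Y$ provided by Theorem \ref{Theorem 0} or Theorem \ref{Theorem M} lifts to a homeomorphism between the orientation double covers intertwining $\varphi_X$ and $\varphi_Y$; since the $G$-action on each cover factors through its $\Z/2$-deck quotient, the lift is automatically $G$-equivariant. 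Conversely, a $G$-equivariant diffeomorphism conjugating $\varphi_X$ and $\varphi_Y$ would descend to a diffeomorphism $X\to Y$, contradicting the $\eta$-invariant obstruction. Hence, once the two orientation double covers are identified smoothly with a single smooth manifold $C$, the deck involutions realize the required inequivalent $G$-involutions on $C$ in the sense of Definition \ref{Definition Inv}.

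The main content is the identification of the smooth structure on the orientation double cover of the exotic $Y$ with the standard smooth model listed. For item (1), the standardness of the universal covers of the exotic $\mathbb{R}P^4$'s together with stability under connected sum with $S^2\times S^2$ pairs upstairs is already established in \cite{[FS], [G0], [A], [G]}. For items (2)--(4), Theorem \ref{Theorem Pr} represents $Y$ as a Gluck twist of the standard $X\# S^2\times S^2$ along an embedded 2-sphere $f(S^2)$. Because $S^2$ is simply connected, the preimage of $f(S^2)$ in the orientation double cover $C$ splits into two disjoint 2-spheres $\widetilde{S}_1, \widetilde{S}_2$ exchanged by the deck involution, and the orientation double cover of $Y$ is obtained from $C$ by simultaneous Gluck twists along $\widetilde{S}_1$ and $\widetilde{S}_2$. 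Using the handlebody of $Y$ from \cite{[AT]} and lifting it to $C$, I would verify that each $\widetilde{S}_i$ bounds a smoothly embedded 3-ball after handle slides, rendering each Gluck twist smoothly trivial so that the cover of $Y$ is diffeomorphic to $C$. Item (5) reduces to an analogous analysis, using that the orientation double cover of a connected sum of two non-orientable manifolds with $\pi_1 = \Z/2$ inherits an $S^3\times S^1$ summand from the doubled punctured real projective spaces, together with the already-established items of the theorem applied factorwise.

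The principal obstacle is the handlebody analysis underlying items (2)--(4): lifting the handle decomposition of \cite{[AT]} to the orientation double cover, tracking the Gluck 2-sphere $f(S^2)$ supplied by Theorem \ref{Theorem Pr} to its two preimages, and certifying that these preimages bound disjoint smoothly embedded 3-balls. With this verification in place, the $\eta$-invariant obstruction of Theorems \ref{Theorem 0} and \ref{Theorem M} precludes any $G$-equivariant diffeomorphism between the resulting involutions, and the desired free orientation-reversing exotic $G$-involutions follow directly from Definition \ref{Definition Inv}.
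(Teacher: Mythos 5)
Your overall framing is correct and matches the paper's: identify the orientation double cover of the standard manifold and of its exotic copy with a single smooth 4-manifold $C$, take the deck transformations as the two free orientation-reversing involutions, lift the homeomorphism between orbit spaces to an equivariant homeomorphism of $C$, and use the $\eta$-invariant on the orbit spaces to preclude an equivariant diffeomorphism. Item~(1) likewise follows the paper's route, invoking \cite{[FS], [G0], [A], [G]} and stability under $S^2\times S^2$ sums.

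For Items~(2)--(5), however, you diverge from the paper and leave the central step as a promissory note. The paper does \emph{not} prove standardness of the covers by lifting the Gluck twist of Theorem~\ref{Theorem Pr}. Instead, in Proposition~\ref{Proposition S} and the second proposition of Section~\ref{Section Inv} it works directly from the explicit description $A = (Q\#S^2\times S^2 - \nu(\R P^2)) \cup D^3\widetilde{\times}S^1$ of \cite{[AO]}, lifts it to the oriented $2$-cover using $\widetilde{Q}\cong S^4$ from \cite{[G0]}, and computes the cover via the decompositions $S^4 \cong (D^3\times S^1)\cup(S^2\times D^2)$ and their iterated analogues; Items~(3)--(4) follow because the circle-sum construction only modifies the cover by the same cut-and-paste, and Item~(5) uses that the kernel of $\Z/2\ast\Z/2\to\Z/2$ is $\Z$ so that the relevant intermediate $2$-cover again falls under the Item~(1) analysis. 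Your alternative --- lifting the Gluck sphere $f(S^2)$ to two disjoint spheres $\widetilde S_1, \widetilde S_2$ in $C$ and arguing each is unknotted --- could in principle work, but the claim that each $\widetilde S_i$ bounds a smoothly embedded $3$-ball ``after handle slides'' is exactly the mathematical content that would need to be supplied, and nothing in the proposal substitutes for it. There is no a priori reason the lifted spheres are unknotted; indeed the whole point of \cite{[AT]} is that the downstairs Gluck sphere is \emph{not} trivially embedded, and you would need to show that passing to the double cover untwists it. Without that verification there is a genuine gap, and you should not expect the handlebody of \cite{[AT]} to make this automatic. Similarly, your treatment of Item~(5) by appeal to ``an $S^3\times S^1$ summand from the doubled punctured real projective spaces'' is a heuristic rather than an argument: the paper instead passes through the index-two subgroup $\Z\leq\Z/2\ast\Z/2$ and reuses the decomposition proved for the $\Z$-cover. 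I would recommend abandoning the Gluck-lift route and instead lifting the $\R P^2$-blowdown description of $A$ directly, as the paper does, where the needed decomposition of $S^4$ renders the cover computation elementary.
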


Item (2) was proven in \cite{[FS1], [AO], [G0]}. Item (3) contains examples of exotic involutions for two different homeomorphism classes of orbit spaces (please see Remark \ref{Remark Orb}). The proof of Theorem \ref{Theorem Inv} is given in Section \ref{Section Inv}.

\subsection{Acknowledgements:} We thank Will Cavendish, Bob Gompf, Danny Ruberman, Ron Stern, Damiano Testa, and Yang Su for useful conversations or e-mail exchanges. We are indebted to an anonymous referee and to Ian Hambleton for kindly pointing out a mistake on a previous version of the manuscript. The Simons Foundation is gratefully acknowledged for its support, and the Erwin Schr\"odinger International Institute in Mathematical Physics for its hospitality. The illustrations were done by Dunja Jogan.

\section{Circle sums and a bordism invariant}

\subsection{Circle sums and $\Z/2$-equivariant $Pin^+$-structures}{\label{Section CSP}}  The basic cut-and-paste construction along codimension three submanifolds that is used in this paper is defined as follows. 

\begin{definition}{\label{Definition CS}} \cite{[CS], [HKT]}. Let $M_1, M_2$ be non-orientable closed smooth 4-manifolds that both admit a $Pin^{+}$-structure. Denote the non-trivial $D^3$-bundle over $S^1$ by  $D^3 \tilde{\times} S^1$, and fix a $Pin^{+}$-structure on it. Let \begin{equation}i_i: D^3 \tilde{\times} S^1 \hookrightarrow M_i\end{equation} for $i = 1, 2$ be smooth embeddings that represent a non-trivial orientation reversing element in the group $\pi_1(M_i)$ such that $i_1$ preserves the $Pin^{+}$-structure and $i_2$ reverses it. Define by\begin{equation}M_1 \#_{S^1} M_2: = (M_1 - i_1(D^3\tilde{\times}S^1)) \cup  (M_2 - i_2(D^3 \tilde{\times} S^1))\end{equation}

the \emph{circle sum} of $M_1$ and $M_2$, which admits a $Pin^+$-structure $\phi_1 \#_{S^1} - \phi_2$.
\end{definition}

The choices of $Pin^{+}$-structures are parametrized by $H^1(M_1\#_{S^1}M_2; \Z/2\Z)$ \cite{[KT]}, \cite[Corollary 6.4]{[S]}. Since they are submanifolds of codimension three, any two homotopic embeddings of a loop into a 4-manifold are isotopic \cite[Example 4.1.13]{[GS]}. Moreover, given group isomorphisms $\pi_1(M_1) \cong \Z$ and $\pi_1(M_2) \cong H$, the Seifert-van Kampen theorem implies $\pi_1(M_1\#_{S^1} M_2) \cong H$. Similarly, if $\pi_1(M_1) \cong \Z/2 \cong \pi_1(M_2)$, then  $\pi_1(M_1\#_{S^1} M_2) \cong \Z/2\Z$. Regarding the bordism class of the resulting manifold, we have the following result.

\begin{proposition}{\label{Proposition Bord}} Let $(M_i, \phi_i)$ be a closed non-orientable manifold with $Pin^+$- structure $\phi_i$ for $i = 1, 2$. The circle sum $M_1\#_{S^1} M_2$ and the connected sum $M_1\# M_2$ are $Pin^+$-bordant to the disjoint union $(M_1, \phi_1) \sqcup (M_2, \phi_2)$.
\end{proposition}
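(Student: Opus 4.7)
The plan is to construct an explicit $5$-dimensional $Pin^+$-bordism in each case. For the connected sum claim, the standard $1$-handle cobordism suffices: to $(M_1 \sqcup M_2) \times [0,1]$ I would attach a $5$-dimensional $1$-handle $D^1 \times D^4$ along $S^0 \times D^4 \subset (M_1 \sqcup M_2) \times \{1\}$, placing one foot in each $M_i$. The top boundary of the resulting cobordism is $M_1 \# M_2$, and since the handle is contractible and carries an essentially unique $Pin^+$-structure, the $Pin^+$-structure $\phi_1 \sqcup \phi_2$ extends, yielding the desired $Pin^+$-bordism between $(M_1 \sqcup M_2, \phi_1 \sqcup \phi_2)$ and $(M_1 \# M_2, \phi_1 \# \phi_2)$.

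For the circle sum claim, I would continue the above construction by attaching a $5$-dimensional $2$-handle $D^2 \times D^3$ along a suitable framed circle $\alpha \subset M_1 \# M_2 \times \{1\}$, chosen so that surgery along $\alpha$ produces $M_1 \#_{S^1} M_2$. Concretely, take $\alpha$ to be a base-point adjusted representative of the class $\gamma_1 \gamma_2^{-1} \in \pi_1(M_1 \# M_2)$, where $\gamma_i \subset M_i$ is the orientation-reversing generator loop from Definition \ref{Definition CS}; this $\alpha$ is orientation-preserving in $M_1 \# M_2$, so its normal bundle is trivial, and the correct framing is singled out by the orientation-reversing gluing on $S^2 \tilde{\times} S^1$ prescribed in Definition \ref{Definition CS}. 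The composite of the $1$-handle and $2$-handle cobordisms is then the desired $Pin^+$-bordism from $M_1 \sqcup M_2$ to $M_1 \#_{S^1} M_2$, equipped with the $Pin^+$-structure $\phi_1 \#_{S^1} -\phi_2$.

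The main obstacle is verifying that the $Pin^+$-structure extends across the $2$-handle, which amounts to showing that $\alpha$, with its framing and induced $Pin^+$-structure, represents the trivial class in $\Omega^{Pin^+}_1 \cong \Z/2\Z$. This is precisely where the compatibility condition of Definition \ref{Definition CS} enters: the requirement that $i_1$ preserve and $i_2$ reverse the $Pin^+$-structure on the tubular neighborhood of the generator loop forces the induced $Pin^+$-structures on $\gamma_1$ and $\gamma_2$ to be opposite, so that the concatenation $\gamma_1 \gamma_2^{-1}$ carries the bounding $Pin^+$-structure, which in turn extends across $D^2 \times D^3$. Once this compatibility is in place, the bordism witnesses $[M_1 \#_{S^1} M_2] = [M_1] + [M_2] = [M_1 \sqcup M_2]$ in $\Omega^{Pin^+}_4$, as claimed.
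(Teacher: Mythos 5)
Your strategy --- constructing an explicit $5$-dimensional $Pin^+$-bordism --- matches the paper's, but the decomposition is different. The paper writes the bordism as a direct gluing of products,
\[
M_1\times [-1, 1]\ \cup_{\nu(\rho)}\ D(\rho)\times [-1, 1]\ \cup_{\nu(\rho)}\ M_2\times [-1, 1],
\]
which amounts to attaching a single round $1$-handle $D^1\times(D^3\widetilde{\times} S^1)$ to $(M_1\sqcup M_2)\times I$ along $\nu(\rho_1)\sqcup\nu(\rho_2)$: the top boundary is then manifestly $(M_1-\nu(\rho_1))\cup_{S^2\widetilde{\times}S^1}(M_2-\nu(\rho_2))=M_1\#_{S^1}M_2$, and the product $Pin^+$-structures glue along the bridge piece precisely because of the compatibility requirement on $i_1, i_2$ in Definition~\ref{Definition CS}. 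You split that round $1$-handle into an ordinary $1$-handle followed by a $2$-handle. This is a legitimate equivalent picture, but it forces you to pass through the intermediate stage $M_1\# M_2$ and then to claim, without proof, that surgery on $\alpha=\gamma_1\gamma_2^{-1}$ with a particular framing returns $M_1\#_{S^1}M_2$; that framing is only pinned down by appeal to the round-handle picture you are replacing, so the paper's direct gluing is both shorter and more self-contained.

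Your justification for why the $Pin^+$-structure extends over the $2$-handle also contains a concrete error: $\Omega^{Pin^+}_1 = 0$, not $\Z/2$ (you are thinking of $\Omega^{Pin^-}_1\cong\Z/2$; both $Pin^+$-structures on $S^1$ bound, one over $D^2$ and one over the M\"obius band). More to the point, the obstruction to extending a $Pin^+$-structure from the attaching region $S^1\times D^3$ over $D^2\times D^3$ is not a bordism class at all but whether the induced structure on $\alpha$ is the one that extends over a disc --- a framing-dependent $H^1(S^1;\Z/2)$ condition, since a circle can be $Pin^+$-null-bordant via a non-orientable surface without its structure extending over $D^2$. With the framing inherited from the round handle this disc-bounding condition is automatic, and the role of the $i_1$/$i_2$ compatibility in Definition~\ref{Definition CS} is really to ensure that the boundary structure on $M_1\#_{S^1}M_2$ is $\phi_1\#_{S^1}(-\phi_2)$, not to make $\alpha$ null-bordant. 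The conclusion you reach is correct, but as written your argument misidentifies both the obstruction group and its role.
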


\begin{proof} The case of connected sums is proven in \cite[Section 7]{[S]}. To prove the claim for circle sums, we use an argument in \cite[p. 160]{[S]}. Denote by $\rho_i\subset M_i$ for $i = 1, 2$ the loops involved in the construction of the circle sum of Definition \ref{Definition CS}, and let $\nu(\rho_i)$ be its tubular neighborhood. In particular, $\nu(\rho_1) = \nu(\rho_2)$, and we denote both by $\nu(\rho)$. The claimed bordism is
\begin{equation}{\label{Bordism}}
M_1\times [-1, 1] \cup_{\nu(\rho)} D(\rho)\times [-1, 1] \cup_{\nu(\rho)} M_2\times [-1, 1].
\end{equation}
\end{proof}

The $\eta$-invariant can be computed using the following notion as it is done in Lemma \ref{Lemma Values}.

\begin{definition}{\label{Definition Equiv}} \cite[p. 153]{[S]}. $\Z/2$-equivariant $Pin^+$-structure. Let $M$ be a non-orientable $n$-manifold, and suppose its orientation double cover $\widetilde{M}$ is the boundary of an $(n + 1)$-manifold $W$ with involution $T: W\rightarrow W$ extending the non-trivial covering transformation of $\widetilde{M}\rightarrow M$, and whose fixed points are isolated. Suppose $M$ has a $Pin^+$-structure $\phi$, which determines a $Pin^+$-structure $\widetilde{\phi}$ on $\widetilde{M}$.

A $\Z/2$-equivariant $Pin^+$-structure on $W$ is a $Pin^+$-structure $(P, \phi_W)$ on the tangent bundle $TW$ together with a $Pin^+(n + 1)$-equivariant involution $\overline{T}:P\rightarrow P$ making the diagram

\begin{center}
$
\begin{CD}
TW @>f>\cong> P\times_{Pin^+(n + 1)} \R^{n + 1}\\
@VdTVV   @VV\overline{T}\times idV\\
TW @ >f>\cong> P\times_{Pin^+(n + 1)} \R^{n + 1}
\end{CD}
$
\end{center}

commutative. In particular, $\phi_W$ determines $\widetilde{\phi}$. The data of a $\Z/2$-equivariant $Pin^+$-structure is encoded in the quadruple $(W, P, f, \overline{T})$.\\

If $x\in W$ is a fixed point of $T$, then $dT: T_xW \rightarrow T_xW$ is multiplication by $-1$. Identifying $P_x$ with $Pin^+(n + 1)$, this implies that $\overline{T}: P_x\rightarrow P_x$ is left-multiplication by $s_{n + 1} = e_1 \cdot \ldots \cdot e_{n + 1}$ or $-(s_{n + 1})$. Here, $s_{n + 1}$ is the central element in the Clifford algebra $C^+(\R^{n+1})$ that squares to the identity (see \cite[Section 3]{[S]} for details). An index $\iota_x\in \{\pm\}$ is attached to each fixed point according to the sign of the involution $\overline{T}$.

\end{definition}

\subsection{Mapping tori, exotic $\R P^4$'s and their universal covers}{\label{Section FRP}} Circle sums with mapping tori were used in \cite{[CS]} to unveil inequivalent smooth structures on $\R P^4$. We now recall the construction with the purpose of making this note as self-contained as possible.  Take the element in the group $GL(3; \Z)$ given by
\[A =\left( \begin{array}{ccc}
0 & 1  & 0   \\
0 & 0  & 1  \\
- 1 & u  & v 
\end{array} \right)\]

that satisfies $det A = -1$ and $det(I - A^2) = \pm 1$. Considering the 3-torus as $T^3 = \R^3/\Z^3$, the choice of matrix yields a diffeomorphism $\varphi_{A}: T^3\rightarrow T^3$ with induced map $(\varphi_{A})^{\ast} = A$ on $H_1(T^3; \Z) = \Z^3$. The mapping torus $M_A$ of $\varphi_A$, i.e., the quotient of $T^3\times [0, 1]$ under the identification $(x, 0) \sim (\varphi_A(x), 1)$, is a $T^3$-bundle over $S^1$.\\

Let $X$ be a closed smooth non-orientable 4-manifold with fundamental group of order two, let $\rho \subset X$ be the loop representing an element in the group $\pi_1(X)$ that reverses orientation. Denote its tubular neighborhood by $\nu(\rho)$. Let $M_0$ be the tubular neighborhood of the loop $(x_0 \times [0, 1]/\sim)$ $\subset M_A$, where $\varphi_A(x_0) = x_0$ for $x_0 \in T^3$. Construct the circle sum  \begin{equation}{\label{Construction}}
X_A:= X \#_{S^1} M_A = (X - \nu(\rho)) \cup_{\partial \nu(\rho)} (M_A - M_0)
\end{equation}

of Definition \ref{Definition CS} by gluing together $X - \nu(\rho)$ and $M_0$ along their common boundary. The following result shows that the homeomorphism class of $X$ is invariant under this cut-and-paste operation.

\begin{lemma}{\label{Lemma Homeo}} \cite[Section 3]{[CS]}, \cite[Section 7]{[S]}. The 4-manifolds $X$ and $X_A$ are homeomorphic.
\end{lemma}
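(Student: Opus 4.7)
The plan is to verify that $X_A$ shares all of the topological classification invariants of $X$, so that Freedman--Quinn's classification of closed topological 4-manifolds with ``good'' fundamental group (here $\Z/2$) forces $X_A \cong X$. The crucial input will be the hypothesis $\det(I - A^2) = \pm 1$, which kills the $\Z^3$ piece coming from the mapping torus once the relation $t^2 = 1$ is imposed.

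First I would compute $\pi_1(X_A)$ via Seifert--van Kampen applied to the decomposition $X_A = (X - \nu(\rho)) \cup_{\partial \nu(\rho)} (M_A - M_0)$. Since each tubular neighborhood is of a codimension-three loop, its removal preserves the ambient fundamental group, so the pieces have $\pi_1 = \Z/2$ and $\pi_1 = \Z^3 \rtimes_A \Z$, amalgamated over the $\Z$ generated by a meridional circle $t$. The gluing identifies $t$ with the generator of $\Z/2$ (imposing $t^2 = 1$) and with the base circle of the mapping torus on the other side. Combining the mapping-torus relations $tat^{-1} = A(a)$ with $t^2 = 1$ forces $(I - A^2)a = 0$ on the abelian subgroup generated by the $T^3$-fiber; the hypothesis $\det(I - A^2) = \pm 1$ then ensures that $I - A^2$ is invertible over $\Z$, killing this abelian subgroup and leaving $\pi_1(X_A) \cong \Z/2$.

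Next I would match the remaining invariants. A Mayer--Vietoris computation, using $\chi(M_A) = 0$ for the mapping torus of a $T^3$-diffeomorphism, yields $\chi(X_A) = \chi(X)$. The orientation character $w_1$ is preserved, since the loop $t$ reverses orientation in both pieces (on the $M_A$-side because $\det A = -1$). The Kirby--Siebenmann invariant vanishes for both manifolds since they are smooth. What remains is to match the $w_2$-quadratic refinement of the intersection form on $H_2$ of the universal cover, and this is the step I expect to be the main obstacle: it would require careful bookkeeping of how spherical 2-cycles change when $\nu(\rho)$ is replaced by $M_A - M_0$, and then passing to the $\Z/2$-cover.

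A cleaner route I would prefer is to exhibit a topological $s$-cobordism from $X$ to $X_A$, built by thickening the bordism idea of Proposition \ref{Proposition Bord} and capping off the $M_A$-end. One would then apply Freedman's topological $s$-cobordism theorem, valid for the good group $\Z/2$, with the necessary vanishing of Whitehead torsion supplied by $\mathrm{Wh}(\Z/2) = 0$; this sidesteps the explicit verification of the $H_2$ quadratic form.
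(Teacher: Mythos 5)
Your first approach, matching the classifying invariants ($\pi_1$, $\chi$, $w_1$, Kirby--Siebenmann, and the $w_2$-type data on $\pi_2$), corresponds to the route the paper only mentions in passing: appealing directly to the homeomorphism classification in \cite[Theorem~3]{[HKT]}. Your $\pi_1$ computation is correct, and you rightly single out $\det(I - A^2) = \pm 1$ as the hypothesis that kills the $T^3$-generated subgroup. You also correctly flag that verifying the quadratic form data is the delicate step along that route.

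Your preferred second approach, however, has a genuine gap, and it is precisely the gap that the paper's actual proof is structured to avoid. The paper's argument is: (i) cite \cite[Theorem~3.1]{[CS]} to establish that $X$ and $X_A$ are homotopy equivalent; (ii) upgrade to a \emph{simple} homotopy equivalence using $\mathrm{Wh}(\Z/2) = 0$; (iii) feed this into the topological surgery exact sequence $L_5(\Z\pi_1, w_1) \to \mathcal{S}^{\mathrm{TOP}}(X) \to [X, G/\mathrm{TOP}]$ and use the vanishing of the relevant surgery group (\cite[Theorem~13.1]{[Wa]}) to deduce the existence of a topological $s$-cobordism; (iv) apply Freedman's $s$-cobordism theorem for the good group $\Z/2$. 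Your proposal tries to shortcut (i)--(iii) by ``thickening the bordism of Proposition~\ref{Proposition Bord} and capping off the $M_A$-end,'' but that bordism is a three-ended cobordism from $X_A$ to $X \sqcup M_A$, not an $h$-cobordism, and there is no indication why capping $M_A$ (which does bound, but not in any controlled way) would produce a cobordism whose two inclusions are homotopy equivalences with vanishing Whitehead torsion. The homotopy equivalence $X \simeq X_A$ is the nontrivial content of \cite[Theorem~3.1]{[CS]} --- it requires the hypothesis $\det(I - A^2) = \pm 1$ not only at the level of $\pi_1$ but also to control $\pi_2$ and the rest of the homotopy type --- and your sketch does not supply a substitute for it. Without that input, a bordism between $X$ and $X_A$ carries no homeomorphism conclusion; the $s$-cobordism structure must be earned, and in the paper it is earned via surgery theory rather than by explicit geometric construction.
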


\begin{proof} The manifolds $X$ and $X_A$ are homotopy equivalent, and \cite[Theorem 3.1]{[CS]} implies that there exists a simple homotopy equivalence; the Whitehead group $Wh(\Z/2\Z)$ is trivial (see \cite[Proof of Theorem 13 A.1]{[W]}). The surgery exact sequence
\begin{equation}
\longrightarrow L_5(\Z \pi_1(X), \omega_1(X)) \longrightarrow \mathcal{S}^{TOP}(X) \longrightarrow [X, G/TOP] \longrightarrow
\end{equation}

\cite[Theorem 10.3, Theorem 10.5]{[W]} implies the existence of an $s$-cobordism between these manifolds. Since the surgery group vanishes \cite[Theorem 13.1]{[Wa]}, it follows from \cite{[F]} that these manifolds are homeomorphic. 

One can alternatively appeal to the topological classification in \cite[Theorem 3]{[HKT]} to conclude the existence of the homeomorphism. 

\end{proof}

\begin{convention}{\label{Convention 1}} Through out the paper, we will denote by $Q$ the inequivalent smooth structure on $\R P^4$ built in \cite{[CS]} using the mapping torus that corresponds to the matrix with entries $u = 1$ and $v = 0$. Its universal cover was proven to be diffeomorphic to $S^4$ in \cite{[G0]}. \end{convention}

\subsection{Bordism and diffeomorphism type invariant: the $\eta$-invariant}{\label{Section eta1}}  Let $M$ be a 4-manifold with a Riemannian metric $g$, and a $Pin^+$-structure $\phi$. The fundamental invariant of this paper is the $\eta$-invariant, $\eta(M, g, \phi)$, and we refer the reader to \cite{[Gi], [S]} for details.  It is proven in  \cite[Proposition 4.3]{[S]} that $\eta(M, g, \phi)$ mod $2\Z$ is a $Pin^+$-bordism invariant, i.e., it depends on the $Pin^+$-bordism class of $(M, \phi)$ but not on the choice of Riemannian metric. Thus, we drop $g$ from our notation, and we denote it by\begin{equation} \eta(M, \phi).\end{equation} 

Following a suggestion in \cite{[Gi]}, the known inequivalent smooth structures on $\R P^4$ \cite{[CS], [FS]} were detected using this spectral invariant in \cite{[S], [O]}. The value $\eta(M, \phi)$ mod $2\Z$ determines $Pin^+$-bordism classes $[M, \phi]\in \Omega_4^{Pin^+}\cong \Z/16\Z$, where addition in this group is the circle sum $\#_{S^1}$ of Definition \ref{Definition CS} and $\R P^4$ is the generator \cite{[KT]}.\\

The calculation of the $\eta$-invariant in \cite{[S], [O]} shows that there is no $Pin^+$-structure preserving diffeomorphism between the real projective 4-space and $Q$, hence telling apart the smooth structure of the exotic $\R P^4$'s \cite{[CS], [FS]} from the standard one. The precise values are collected in the following result.

\begin{theorem}{\label{Theorem Stolz}} \cite[Theorem 3.3]{[Gi]}, \cite[Theorem A]{[S]}, \cite[Theorem A]{[O]}. 
\begin{center} $\eta(\R P^4, \phi) = \pm 1/8$ mod $2\Z$ and $\eta(Q, \phi') = \pm 7/8$ mod $2\Z$ \end{center} for all $Pin^{+}$-structures $\phi, \phi'$ on $\R P^4$ and $Q$ respectively.
\end{theorem}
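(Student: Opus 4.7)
The plan is to exploit bordism invariance: by [S, Proposition 4.3], $\eta \pmod{2\Z}$ descends to a homomorphism $\Omega_4^{Pin^+} \to \R/2\Z$, and since $\Omega_4^{Pin^+} \cong \Z/16\Z$ is generated by $[\R P^4, \phi]$ (Kirby--Taylor), it suffices to pin down the value on this generator and on the class $[M_A]$. Moreover, the Cappell--Shaneson decomposition $Q = \R P^4 \#_{S^1} M_A$ recorded in \eqref{Construction}, together with Proposition \ref{Proposition Bord}, yields $[Q] = [\R P^4] + [M_A]$ in $\Omega_4^{Pin^+}$, so that $\eta(Q, \phi') = \eta(\R P^4, \phi) + \eta(M_A, \phi_A) \pmod{2\Z}$. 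The sign ambiguity in the statement reflects the fact that the $Pin^+$-structures on each manifold are permuted by $H^1(-;\Z/2)$ and that the nontrivial class acts on $\eta$ by negation.

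For the value on $\R P^4$, I would pass to the $\Z/2$-equivariant framework of Definition \ref{Definition Equiv}. The orientation double cover is $S^4 = \partial D^5$, and the antipodal involution on $D^5$ extends the covering transformation with a unique fixed point at the origin. Equipping $D^5$ with a $\Z/2$-equivariant $Pin^+$-structure and applying the equivariant Atiyah--Patodi--Singer index theorem to the $Pin^+$-Dirac operator, the bulk $\hat A$-contribution vanishes for dimensional reasons, so the computation reduces to the local Clifford-algebraic contribution at the origin. A direct calculation using the index $\iota_x \in \{\pm\}$ of Definition \ref{Definition Equiv} and the explicit form of $s_5$ in $C^+(\R^5)$ yields $\eta(\R P^4, \phi) = \pm 1/8 \pmod{2\Z}$, and in particular certifies that $[\R P^4]$ has order exactly $16$ in $\Omega_4^{Pin^+}$.

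For $\eta(M_A, \phi_A)$ I would carry out the analogous construction following Ochanine: the double cover $\widetilde{M_A}$ is a $T^3$-bundle over $S^1$ with monodromy $A^2$, and one fills it in by a $T^3$-bundle $W$ over $D^2$ equipped with a $\Z/2$-equivariant $Pin^+$-structure extending the deck transformation. Enumerating the isolated fixed points of the involution on $W$ and summing their local Clifford indices produces $\eta(M_A, \phi_A) \pmod{2\Z}$, and once this lands on $\pm 6/8$, combining with the previous paragraph gives $\eta(Q, \phi') = \pm 7/8 \pmod{2\Z}$. The main obstacle is precisely this fixed-point computation: the conditions $\det A = -1$ and $\det(I - A^2) = \pm 1$ are what guarantee that the fixed points of the involution on $W$ are isolated and in bijection with a controlled set of torsion points on the $T^3$-fibers, and it is only the precise arithmetic of their Clifford sign contributions that distinguishes $\pm 7/8$ from the remaining elements of $\tfrac{1}{8}\Z/2\Z$, thereby concluding that $Q$ and $\R P^4$ are not $Pin^+$-diffeomorphic.
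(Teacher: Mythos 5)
This theorem is not proved in the paper; it is imported from Gilkey [Gi], Stolz [S], and Oledzki [O], so there is no in-paper argument to compare against. That said, your outline of the mechanism --- reduction to the $Pin^+$-bordism homomorphism $\Omega_4^{Pin^+} \cong \Z/16\Z \to \R/2\Z$, additivity over the circle sum decomposition $Q = \R P^4 \#_{S^1} M_A$ via Proposition \ref{Proposition Bord}, and Stolz's $\Z/2$-equivariant fixed-point formula $\eta = \tfrac{1}{8}\sum \iota_x$ --- is exactly the machinery developed in [S], and your treatment of the generator is right: the antipodal involution on $D^5$ extends the deck transformation of $S^4 \to \R P^4$ with a single fixed point at the origin, and $\iota_0 = \pm 1$ gives $\eta(\R P^4, \phi) = \pm 1/8$.

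The $M_A$ step, however, has two genuine problems. First, the claimed value $\eta(M_A, \phi_A) = \pm 6/8$ is wrong: the correct value, stated in the paper immediately before Theorem \ref{Theorem Stolz2} (quoting [S, Proposition 7.3]), is $\eta(M_A, \phi_A) = 1 \bmod 2\Z$, and with this the arithmetic closes as $\pm 1/8 + 1 \equiv \mp 7/8 \bmod 2\Z$. Your $6/8$ looks like a naive $7/8 - 1/8$ taken without tracking the sign conventions, and it is incompatible with Theorem \ref{Theorem Stolz2}, whose exclusion $\eta(M,\phi) \neq \pm 1/2$ is tailored precisely to $\eta(M_A) = 1$, since $\eta(M) + 1 = \pm\eta(M)$ forces $\eta(M) = \pm 1/2$. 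Second, the nullbordism you propose for $\widetilde{M_A}$ --- a $T^3$-bundle $W$ over $D^2$ --- cannot exist as described: any fiber bundle over the disc is trivial and therefore has boundary $T^3 \times S^1$, whereas $\widetilde{M_A}$ is the mapping torus of $\varphi_{A^2}$ and the hypothesis $\det(I - A^2) = \pm 1$ forces $H_1(\widetilde{M_A}) \cong \Z \neq \Z^4 = H_1(T^3\times S^1)$. Producing a filling with the required involution and isolated fixed points is the substantive content of [S, Proposition 7.3] and cannot be elided. Lastly, a minor attribution slip: the $M_A$ and $Q$ computations are due to Stolz, not Ochanine; Oledzki [O] treats the distinct Fintushel--Stern exotic $\R P^4$.
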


The change of the $\eta$-invariant under the surgical procedure of Section \ref{Section FRP} is \begin{equation}
\eta(M\#_{S^1} M_A, \phi') = \eta(M, \phi) + \eta(M_A, \phi_A) = \eta(M, \phi) + 1
\end{equation} mod $2\Z$ for the corresponding $Pin^+$-structures \cite[Proposition 7.3]{[S]}, which yields the following general result. 

\begin{theorem}{\label{Theorem Stolz2}} \cite[Theorem 7.4]{[S]}. Let $M$ be a closed smooth non-orientable 4-manifold that admits a $Pin^+$-structure $\phi$. Suppose $H^1(M; \Z/2\Z) \cong \Z/2\Z$, and $\eta(M, \phi) \neq \pm 1/2$ mod $2\Z$. Then, \begin{equation} \eta(M\#_{S^1} M_A, \phi') \neq \eta(M, \phi) \end{equation} for all  $Pin^+$-structures on $M\#_{S^1} M_A$.
\end{theorem}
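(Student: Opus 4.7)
I would argue by contradiction: suppose there exists a $Pin^+$-structure $\phi'$ on the circle sum with $\eta(M\#_{S^1}M_A, \phi') \equiv \eta(M, \phi) \pmod{2\Z}$.

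By Definition \ref{Definition CS}, every $Pin^+$-structure on $M\#_{S^1}M_A$ has the form $\phi_M \#_{S^1}(-\phi_A)$ for some pair $(\phi_M, \phi_A)$ of $Pin^+$-structures on $M$ and $M_A$. The additivity formula displayed just above the theorem then yields
\begin{equation*}
\eta(M\#_{S^1}M_A, \phi') \equiv \eta(M, \phi_M) + \eta(M_A, \phi_A) \equiv \eta(M, \phi_M) + 1 \pmod{2\Z}.
\end{equation*}
The equality $\eta(M_A, \phi_A) \equiv 1 \pmod{2\Z}$ is independent of the chosen $Pin^+$-structure on $M_A$: its two $Pin^+$-structures produce $\eta$-values related by a sign, and $1 \equiv -1 \pmod{2\Z}$.

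The hypothesis $H^1(M;\Z/2)\cong\Z/2$ means that $M$ admits exactly two $Pin^+$-structures $\phi$ and $\phi^\ast$, related by the unique nontrivial $\alpha \in H^1(M;\Z/2)$. The key technical ingredient — and the main obstacle of the plan — is the sign-symmetry
\begin{equation*}
\eta(M, \phi^\ast) \equiv -\eta(M, \phi) \pmod{2\Z},
\end{equation*}
analogous to the $\pm 1/8$ and $\pm 7/8$ patterns recorded in Theorem \ref{Theorem Stolz}; this ultimately reflects the complex-conjugation behavior of the twisted Dirac operator under tensoring with the flat real line bundle classified by $\alpha$.

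Combining the two steps, $\eta(M\#_{S^1}M_A, \phi')$ lies in $\{\eta(M,\phi)+1,\ -\eta(M,\phi)+1\}$ modulo $2\Z$. Matching with $\eta(M,\phi)$ in the first case yields the absurdity $1\equiv 0 \pmod{2\Z}$, whereas the second case forces $2\eta(M,\phi)\equiv 1 \pmod{2\Z}$, i.e., $\eta(M,\phi)\equiv\pm 1/2 \pmod{2\Z}$, contradicting the standing hypothesis. This contradiction completes the proof.
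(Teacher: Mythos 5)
Your proof is correct and takes essentially the same route that the paper points to: it does not prove this theorem itself but cites \cite[Theorem 7.4]{[S]}, and the sentence immediately preceding it — quoting the additivity formula $\eta(M\#_{S^1}M_A,\phi')\equiv\eta(M,\phi_M)+1\pmod{2\Z}$ from \cite[Proposition 7.3]{[S]} and remarking that this "yields the following general result" — is precisely the two-step reduction you carry out (additivity to $\eta(M,\phi_M)+1$, sign-symmetry in $\phi_M$ from $H^1(M;\Z/2)\cong\Z/2$, then exclude $1\equiv0$ and $2\eta(M,\phi)\equiv1\pmod{2\Z}$). Two small remarks on presentation rather than substance: the heuristic you give for the sign-flip, "complex-conjugation behavior of the twisted Dirac operator," is looser than the actual mechanism — the canonical involution on $Pin^+$-structures (adding $w_1(M)\in H^1(M;\Z/2)$) acts as $-1$ on $\Omega_4^{Pin^+}\cong\Z/16$, and since $w_1(M)$ is the unique nontrivial class under the hypothesis $H^1(M;\Z/2)\cong\Z/2$, this involution swaps the two $Pin^+$-structures, giving $\eta(M,\phi^*)\equiv-\eta(M,\phi)$; and parametrizing the structures on the circle sum by pairs $(\phi_M,\phi_A)$ is an over-count (since $M\#_{S^1}M_A$ is homeomorphic to $M$ and so has exactly two $Pin^+$-structures), but this is harmless because every structure on the circle sum does restrict, after uniquely extending across the codimension-three loop, to some such pair, which is all the argument uses.
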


We finish this section with the following observation regarding the limitations of using the $\eta$-invariant to discern inequivalent smooth structures, which was kindly pointed out to us by a referee.

\begin{proposition}{\label{Proposition Limits}} The $\eta$-invariant distinguishes at most two inequivalent smooth structures on closed non-orientable smooth Pin$^+$-manifolds with fundamental group $\Z/2$.
\end{proposition}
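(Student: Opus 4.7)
The plan is to exploit the fact that $\eta\bmod\Z$ is a homeomorphism invariant \cite{[HKT]} together with the enumeration of $Pin^+$-structures when $\pi_1=\Z/2$.

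Let $X$ be a closed non-orientable smooth 4-manifold with $\pi_1(X)=\Z/2$ admitting a $Pin^+$-structure. Since $H^1(X;\Z/2)=\Hom(\Z/2,\Z/2)=\Z/2$, there are exactly two $Pin^+$-structures $\phi_1,\phi_2$ on $X$, parameterized as a torsor by this group (cf.\ \cite[Corollary 6.4]{[S]}). For any smoothing $Y$ of $X$, the class $\eta(Y,\phi_i)\bmod \Z\in\Q/\Z$ is determined by the topological pair $(X,\phi_i)$, so $\eta(Y,\phi_i)$ lies in a fixed two-element preimage $\{a_i,a_i+1\}\subset\Q/2\Z$. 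Define the smoothing refinement $\epsilon_i(Y)\in\Z/2$ by $\eta(Y,\phi_i)=a_i+\epsilon_i(Y)$. A priori the pair $(\epsilon_1(Y),\epsilon_2(Y))\in(\Z/2)^2$ could take four values, giving up to four smoothings distinguishable by $\eta$.

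The key step is to show that $\epsilon_1(Y)=\epsilon_2(Y)$ for every smoothing $Y$, so that the image sits on the diagonal of $(\Z/2)^2$ and has cardinality at most two. The point is that any two smoothings $Y,Y'$ of $X$ are related by the topological $Pin^+$-bordism $X\times[0,1]$ (with boundary identification supplied by a homeomorphism), whose obstruction to being smoothly realized is a single $\Z/2$-valued invariant independent of the boundary $Pin^+$-structure. Using that $\eta$ is additive under disjoint union and detects $\Omega_4^{Pin^+}\cong \Z/16$ of \cite{[KT], [S]}, together with Proposition \ref{Proposition Bord} relating the circle sum and the connected sum to the disjoint union of Pin$^+$-bordism classes, one identifies $\epsilon_i(Y)-\epsilon_i(Y')$ with this topological obstruction; as the latter does not involve $\phi_i$, it follows that $\epsilon_1$ and $\epsilon_2$ coincide as functions of the smoothing. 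Equivalently, the fibers of the forgetful map $\Omega_4^{Pin^+,\mathrm{DIFF}}\to\Omega_4^{Pin^+,\mathrm{TOP}}$ are cosets of a single element acting on $\eta$-values by the same amount on both $Pin^+$-structures.

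Once this is established, the proposition follows: the map $Y\mapsto(\eta(Y,\phi_1),\eta(Y,\phi_2))$ from smoothings of $X$ to $(\Q/2\Z)^2$ takes at most two values, and non-diffeomorphic smoothings distinguishable by $\eta$ must have distinct images. The main obstacle is the middle paragraph, namely the rigorous identification of $\epsilon_i(Y)-\epsilon_i(Y')$ with a $\phi_i$-independent invariant of the smoothings. The cleanest path is via a Kirby--Siebenmann-type analysis of the non-smoothability of the topological bordism $X\times[0,1]$, paired with the bordism-invariance of $\eta\bmod 2\Z$ from \cite[Proposition 4.3]{[S]}; any such argument will force the two refinement bits to agree and complete the proof.
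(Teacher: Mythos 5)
The core idea of your proof---that $\eta\bmod\Z$ is a homeomorphism invariant by \cite{[HKT]} while $\eta\bmod 2\Z$ is a diffeomorphism invariant, so that on a fixed homeomorphism type the $\eta$-invariant at a fixed $Pin^+$-structure takes at most two values $a, a+1$---is exactly the idea in the paper's proof, which is only a few lines long. Where your proposal departs is in the middle paragraph: you correctly notice that a priori the two $Pin^+$-structures $\phi_1, \phi_2$ could have independent refinement bits $\epsilon_1(Y), \epsilon_2(Y)\in\Z/2$, which would let $\eta$ distinguish up to four smoothings. This is a legitimate concern that the paper's proof does not explicitly discuss. However, the resolution you propose, via a Kirby--Siebenmann analysis of the topological bordism $X\times[0,1]$, is only sketched, and you acknowledge it is ``the main obstacle.'' The claimed identification of $\epsilon_i(Y)-\epsilon_i(Y')$ with a $\phi_i$-independent obstruction is asserted rather than established, and it is not clear how to make it precise without circularity (the Kirby--Siebenmann invariant of a closed smooth spin $4$-manifold is tied to the signature, which in turn governs $\eta$ via Proposition \ref{Proposition Ad}).

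There is a simpler way to close this gap, which the paper uses implicitly throughout (see Theorem \ref{Theorem Stolz} and the proof of Item \ref{Item 3} of Proposition \ref{Proposition Values}): the two $Pin^+$-structures on a closed non-orientable $4$-manifold with $H^1(M;\Z/2)\cong\Z/2$ satisfy $\eta(M,-\phi) = -\eta(M,\phi)\bmod 2\Z$. This forces $\epsilon_1=\epsilon_2$ at once: writing $\eta(Y,\phi_1)=a_1+\epsilon_1$ and $\eta(Y,\phi_2)=a_2+\epsilon_2$ with $a_2\equiv -a_1\bmod\Z$, the sign relation gives $a_2+\epsilon_2 = -a_1-\epsilon_1\bmod 2\Z$, hence $\epsilon_2=-\epsilon_1=\epsilon_1$ in $\Z/2$. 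With this observation your proof is complete and the Kirby--Siebenmann digression is unnecessary. The paper's own proof sidesteps the issue entirely by working with a single $Pin^+$-structure and simply observing that the lift of $\eta\bmod\Z$ to $\eta\bmod 2\Z$ is a two-element ambiguity; this is adequate precisely because the second $Pin^+$-structure carries no independent information.
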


\begin{proof} Let $M$ and $M'$ be homeomorphic closed non-orientable 4-manifolds with fundamental group $\Z/2$ that admit a $Pin^+$-structure. By \cite[Theorem 1]{[HKT]} we have $\eta(M, \phi) = \eta(M', \phi')$ mod $\Z$ for $Pin^+$-structures on $M$ and $M'$ respectively. Equivalently, $\eta(M, \phi) = \eta(M', \phi') + n$ for $n\in \Z$. Suppose now that  $\eta(M, \phi) \neq \eta(M', \phi')$ mod $2\Z$. It implies that $n$ is an odd integer, and in particular $\eta(M, \phi) = \eta(M', \phi') + 1$. Since the $\eta$-invariant mod $2\Z$ is a diffeomorphism invariant, it distinguishes at most two inequivalent smooth structures on such 4-manifolds.

\end{proof}

\subsection{Computations of the $\eta$-invariant}{\label{Section eta2}} We now engage in the calculations of the $\eta$-invariant that are required to prove Theorem \ref{Theorem Pr} and Theorem  \ref{Theorem M}. The bordism constructed in Proposition \ref{Proposition Bord} between circle sums, connected sums, and disjoint unions allows us to compute the invariant using the following result. 

\begin{proposition}{\label{Proposition Ad}}  

\begin{itemize}
\item \cite[Section 7]{[S]}. The $\eta$-invariant is additive modulo $2\Z$ with respect to connected sums, and circle sums.

\item \cite[Corollary 5.2]{[S]}. Let $M$ be a closed 4-manifold with Spin-structure $\phi$. Then
\begin{center}
$\eta(M, \phi) = 1/16$ $\sigma(M)$ mod $2\Z$
\end{center}

where $\sigma(M)$ is the signature of $M$.
\end{itemize}
 
\end{proposition}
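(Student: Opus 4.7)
The plan is to lean on the Pin$^+$-bordism invariance of $\eta \bmod 2\Z$ already recorded above (\cite[Proposition 4.3]{[S]}), combined with the explicit bordism supplied by Proposition \ref{Proposition Bord}.

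For the additivity assertion the argument is immediate. Proposition \ref{Proposition Bord} exhibits a Pin$^+$-bordism from $(M_1 \# M_2, \phi_1 \# \phi_2)$, and respectively from $(M_1 \#_{S^1} M_2, \phi_1 \#_{S^1} (-\phi_2))$, to the disjoint union $(M_1, \phi_1) \sqcup (M_2, \phi_2)$. Since $\eta$ is tautologically additive on disjoint unions, applying bordism invariance modulo $2\Z$ across this bordism yields both additivity identities at once.

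For the signature formula my plan is to recognise both sides as homomorphisms from $\Omega_4^{\mathrm{Spin}}$ to $\R/2\Z$ and verify their agreement on a generator. Rokhlin's theorem forces $16 \mid \sigma(M)$ on every closed Spin 4-manifold $M$, so $\sigma(M)/16 \in \Z$ is well-defined; invariance of signature under oriented bordism then promotes $[M,\phi] \mapsto \sigma(M)/16 \bmod 2\Z$ to a homomorphism out of $\Omega_4^{\mathrm{Spin}}$. The second homomorphism is obtained by composing the forgetful map $\Omega_4^{\mathrm{Spin}} \to \Omega_4^{Pin^+}$ with $\eta \bmod 2\Z$. Since $\Omega_4^{\mathrm{Spin}} \cong \Z$ is generated by the K3 surface with $\sigma(K3) = -16$, the problem reduces to verifying the single equality $\eta(K3, \phi) \equiv -1 \pmod{2\Z}$.

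The main obstacle will be this last computation. My approach is to invoke the Atiyah--Patodi--Singer index theorem for the Dirac operator on an appropriate Spin bounding manifold, using the vanishing of the relevant Pontryagin integrands in odd parity together with the identity $\mathrm{ind}(D_N) = -\sigma(N)/8$ valid on every closed Spin 4-manifold $N$. The delicate step is controlling the APS boundary term with sufficient precision to detect $\eta$ modulo $2\Z$ rather than merely modulo $\Z$; this is the content of \cite[Corollary 5.2]{[S]}, and once it is in hand, the K3 verification closes the argument.
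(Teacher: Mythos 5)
The paper offers no proof here: the proposition is stated as a direct citation to Stolz ([S, Section 7] for the first bullet and [S, Corollary 5.2] for the second), so there is nothing internal to compare against. Evaluating your proposal on its own terms, the additivity argument is correct and matches the spirit of what the paper itself later uses: the explicit bordism of Proposition \ref{Proposition Bord} together with the $Pin^+$-bordism invariance of $\eta \bmod 2\Z$ ([S, Proposition 4.3]) immediately gives additivity for both connected sums and circle sums.

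The signature formula is where the gap lies. Your reduction to a generator of $\Omega_4^{\mathrm{Spin}} \cong \Z$ is sound: both $[M,\phi] \mapsto \eta(M,\phi) \bmod 2\Z$ and $[M,\phi] \mapsto \sigma(M)/16 \bmod 2\Z$ factor through $\Omega_4^{\mathrm{Spin}}$, and Rokhlin guarantees the second is integer-valued before reducing. But the proposed verification on $K3$ does not go through as written. The class of $K3$ generates $\Omega_4^{\mathrm{Spin}}$ and maps to the nontrivial element of order two in $\Omega_4^{Pin^+}\cong \Z/16$, so $K3$ bounds in neither the Spin nor the $Pin^+$ category; there is no bounding $5$-manifold to which one could apply the Atiyah--Patodi--Singer theorem with $K3$ as boundary. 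Your closing sentence is also circular, since it defers precisely to [S, Corollary 5.2], the statement being proved. A route that avoids both problems, and uses only facts the paper already has on hand, is to note that $K3$ equals $8[\R P^4]$ in $\Omega_4^{Pin^+}$; then bordism invariance and additivity give $\eta(K3,\phi) \equiv 8\,\eta(\R P^4) \equiv 8\cdot(\pm 1/8) \equiv \pm 1 \bmod 2\Z$ by Theorem \ref{Theorem Stolz}, which agrees with $\sigma(K3)/16 = -1 \bmod 2\Z$.
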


The following proposition provides calculations of the $\eta$-invariant mod $2\Z$ for the respective $Pin^+$-structures, and for all $k\in \N$.

\begin{proposition}{\label{Proposition Values}} \begin{equation}{\label{Item 1}} \eta(S(2\gamma \oplus \R)\# (k - 1)(S^2\times S^2), \pm \phi) = 0  \end{equation} \begin{equation}{\label{Item 2}} \eta(\#_{S^1}r\cdot \R P^4 \# (k - 1)(S^2\times S^2), \pm \phi) = \pm \frac{r}{8} \end{equation} \begin{equation}{\label{Item 3}} \eta(S^3\widetilde{\times} S^1 \# (k - 1)(S^2\times S^2), \pm \phi_{k}) = 0\end{equation} \begin{equation}{\label{Item 4}} \eta(A \# (k - 1)(S^2\times S^2), \pm \phi_{A, k}) = \pm 1.\end{equation}
\end{proposition}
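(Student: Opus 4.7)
The strategy is to use additivity of $\eta$ mod $2\Z$ from Proposition \ref{Proposition Ad} to reduce each identity to a handful of base computations in $\Omega_4^{Pin^+}\cong \Z/16\Z$. Since $S^2\times S^2$ admits a Spin-structure with vanishing signature, the second bullet of Proposition \ref{Proposition Ad} gives $\eta(S^2\times S^2,\phi_0)=0$ mod $2\Z$; combined with additivity under connected sums, this handles the $\#(k-1)(S^2\times S^2)$ factor in every item and reduces each calculation to the case $k=1$.

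Item \eqref{Item 2} is then immediate: applying additivity under circle sums inductively together with Theorem \ref{Theorem Stolz} yields
\[
\eta(\#_{S^1}\,r\cdot \R P^4,\pm\phi)\; =\; r\cdot \eta(\R P^4,\pm\phi)\; =\; \pm r/8 \quad \bmod 2\Z,
\]
where the signs track the convention $\phi_1\#_{S^1}-\phi_2$ of Definition \ref{Definition CS}, and the freedom in the choice of $Pin^+$-structure realizes both signs.

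For items \eqref{Item 1} and \eqref{Item 3}, I would prove $\eta=0$ mod $2\Z$ by exhibiting a $Pin^+$-null-bordism of the manifold in question. For $S^3\widetilde{\times}S^1$, the obvious candidate is the total space of the non-trivial $D^4$-bundle over $S^1$, and the $Pin^+$-structure extends across this filling. For $S(2\gamma\oplus\R)$, the orientation 2-cover is $S^2\times S^2$ (since $2\gamma$ pulls back to the trivial rank-$2$ bundle over $S^2$), so one can either construct a $\Z/2$-equivariant $Pin^+$-null-bordism in the sense of Definition \ref{Definition Equiv} and read off $\eta$ from the fixed-point data, or show directly that the associated disk bundle $D(2\gamma\oplus\R)$ over $\R P^2$ carries a $Pin^+$-structure extending the one on $S(2\gamma\oplus\R)$.

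The most delicate case is \eqref{Item 4}. Here $A$ is the Akbulut exotic smooth structure on $S^3\widetilde{\times}S^1\#S^2\times S^2$ built in \cite{[AO]} from $Q$, the Cappell-Shaneson exotic $\R P^4$ with $\eta(Q)=\pm 7/8$ by Theorem \ref{Theorem Stolz}. Because $\eta$ mod $2\Z$ is sensitive to smooth structure (while $\eta$ mod $\Z$ is a homeomorphism invariant by \cite{[HKT]}), one cannot deduce $\eta(A)$ from the homeomorphism $A\cong_{TOP} S^3\widetilde{\times}S^1\#S^2\times S^2$ and item \eqref{Item 3}. Instead, I would trace the construction of $A$ from $Q$ in \cite{[AO]} through smooth $Pin^+$-bordism, producing a bordism between $A$ and a suitable combination of $Q$ and standard pieces so that additivity of $\eta$ under circle sums yields $\eta(A)=\pm 7/8\pm 1/8=\pm 1$ mod $2\Z$, equivalently $[A]=\pm 8\in\Omega_4^{Pin^+}$. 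The main obstacle will be pinning down this smooth $Pin^+$-bordism from the Akbulut handlebody moves and keeping the sign choices of the $Pin^+$-structures consistent throughout.
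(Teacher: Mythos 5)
Your overall strategy matches the paper's, and where it diverges the alternative is valid. Items (1) and (2) follow the paper directly: the disk bundle $D(2\gamma\oplus\R)$ is the paper's $Pin^+$-null-bordism for (1), and (2) is additivity plus Theorem \ref{Theorem Stolz}. (The paper also records the circle-sum decomposition $S(2\gamma\oplus\R)=\R P^4\#_{S^1}\overline{\R P^4}$, which makes the cancellation $\eta(\R P^4,\phi)+\eta(\R P^4,-\phi)=0$ transparent.)

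For Item (3) you take a genuinely different route. The paper does not null-bord $S^3\widetilde{\times}S^1$ directly; it uses the diffeomorphism $\R P^4\#_{S^1}S^3\widetilde{\times}S^1\cong\R P^4$ together with additivity to solve $\eta(\R P^4)+\eta(S^3\widetilde{\times}S^1)=\eta(\R P^4)$ for $\eta(S^3\widetilde{\times}S^1)$, and later (Lemma \ref{Lemma Values}) gives a second proof via a $\Z/2$-equivariant null-bordism of the cover $S^3\times S^1=\partial(S^3\times D^2)$. Your proposed filling $D^4\widetilde{\times}S^1$ works and is arguably cleaner: it is homotopy equivalent to $S^1$, so $w_2=w_1^2=0$ and it carries $Pin^+$-structures, and restriction $H^1(D^4\widetilde{\times}S^1;\Z/2)\to H^1(S^3\widetilde{\times}S^1;\Z/2)$ is an isomorphism, so both $Pin^+$-structures on the boundary extend.

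The genuine gap is Item (4). You have the right strategy and already anticipate the correct numerics $\pm 7/8\pm 1/8=\pm 1$, but the explicit $Pin^+$-bordism tying $A$ to $Q$, which you yourself flag as the ``main obstacle,'' is precisely where the paper does the work, and the argument is incomplete without it. Note also that the relation between $A$ and $Q$ is not a circle sum but a blow-down of an embedded $\R P^2$, namely $A=(Q\#S^2\times S^2-\nu(\R P^2))\cup(D^3\widetilde{\times}S^1)$, so ``additivity under circle sums'' is not the right tool here; what is needed is $Pin^+$-bordism invariance of $\eta$ mod $2\Z$ applied to a bordism adapted to this surgery. Using the blow-down description together with $\R P^4=D^2\widetilde{\times}\R P^2\cup D^3\widetilde{\times}S^1$, the paper writes down the $5$-manifold
\begin{equation*}
(Q\#S^2\times S^2\times[0,1])\cup_{\nu(\R P^2)}(\R P^4\times[0,1])\cup_{\nu(\gamma)}(S^3\widetilde{\times}S^1\times[0,1]),
\end{equation*}
exhibiting $(A,\phi_A)$ as $Pin^+$-bordant to $(Q\#S^2\times S^2)\sqcup\R P^4\sqcup S^3\widetilde{\times}S^1$ with compatible $Pin^+$-structures; Theorem \ref{Theorem Stolz}, Item (3), and Proposition \ref{Proposition Ad} then give $\eta(A,\pm\phi_A)=\pm 1$ mod $2\Z$. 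Supplying such a bordism, or an equivalent bookkeeping of the $Pin^+$-structures through the blow-down, is what your proposal still needs.
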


\begin{proof} We first proof Item \ref{Item 1} of the proposition. The sphere bundle of the Whitney sum of two copies of the real Hopf bundle and the trivial bundle is the boundary of a five dimensional disk bundle that has a $Pin^+$-structure. Therefore, $S(2\gamma \oplus \R)$ represents the zero element in $\Omega^{Pin^+}_4$. Consider the decomposition \begin{equation}\R P^4 = (D^2\widetilde{\times} \R P^2) \cup (D^3 \widetilde{\times} S^1)\end{equation} as the union of the twisted 2-disk bundle over the real projective plane and the twisted 3-disk bundle over the circle. The $S^2$-bundle over $\R P^2$ is the double 
\begin{equation} S(2\gamma \oplus \R) = (D^2 \widetilde{\times} \R P^2) \cup (D^2 \widetilde{\times} \R P^2).\end{equation}
Denote by $\overline{\R P^4}$ the real projective 4-space endowed with the $Pin^+$-structure $- \phi$; $\overline{\R P^4}$ represents the bordism class $[\R P^4, -\phi] = - [\R P^4, \phi] \in \Omega_4^{Pin^+}$. The double is the circle sum $S(2\gamma \oplus \R) = \R P^4 \#_{S^1} \overline{\R P^4}$, whose $\eta$-invariant is zero for both $Pin^+$-structures. The claim now follows from Proposition \ref{Proposition Ad}, since the signature of the connected sum of copies of $S^2\times S^2$ is zero. 

The proof of Item \ref{Item 2} follows from Theorem \ref{Theorem Stolz} and Proposition \ref{Proposition Ad}.

The proof of Item \ref{Item 3} goes as follows. Notice that $H^1(S^3\widetilde{\times}S^1; \Z/2) \cong \Z/2$, and the twisted $S^3$-bundle over $S^1$ admits two $Pin^+$-structures that we denote by $\pm \phi$. Moreover, $\eta(S^3\widetilde{\times} S^1, + \phi) = - \eta(S^3\widetilde{\times} S^1, - \phi)$ mod $2\Z$. Proposition \ref{Proposition Ad} implies that mod $2\Z$ \begin{equation}\eta(S^3\widetilde{\times} S^1 \# (k - 1)(S^2\times S^2), \phi_k) = 
\eta(S^3\widetilde{\times} S^1, \phi). \end{equation} We claim that $\eta(S^3\widetilde{\times} S^1, \pm \phi) = 0$ mod $2\Z$. 
To compute the value of the $\eta$-invariant, take the diffeomorphism $\R P^4 \#_{S^1} S^3\widetilde{\times} S^1 \cong \R P^4$ of the circle sum of $\R P^4$ and $S^3\widetilde{\times}S^1$. Proposition \ref{Proposition Ad} implies that mod $2\Z$ \begin{equation}\eta(\R P^4 \#_{S^1} S^3\widetilde{\times} S^1, \phi'') = \eta(\R P^4, \phi_{\R P^4}) + \eta(S^3\widetilde{\times} S^1, \phi) = \pm 1/8. \end{equation}

Thus, $\eta(S^3\widetilde{\times} S^1, \pm \phi) = 0$ mod $2\Z$ as claimed.

The proof of Item \ref{Item 4} is as follows. The manifold $A$ is constructed by blowing down an $\R P^2$ in $Q\#S^2\times S^2$ \cite[Section 0]{[AO]}, i.e., \begin{equation}A = ((Q\# S^2\times S^2) - \nu(\R P^2)) \cup (D^3 \widetilde{\times} S^1),\end{equation} where $Q$ is the exotic $\R P^4$ of \cite{[CS]} (see Convention \ref{Convention 1}), $\nu(\R P^2) = D^2\widetilde{\times} \R P^2$ is the tubular neighborhood of the real projective plane inside $Q\# S^2\times S^2$, and $D^3\widetilde{\times} S^1$ is the non-orientable $D^3$-bundle over $S^1$. We use an argument in \cite[p. 160]{[S]} to calculate the value mod $2\Z$ of the invariant, since $(A, \phi_A)$ is $Pin^+$-bordant to the disjoint union of $(Q\#S^2\times S^2, \phi_{Q\#S^2\times S^2})$, $(\R P^4, \phi_{\R P^4})$ and $(S^3\widetilde{\times} S^1, \phi)$ with the appropriate $Pin^+$-structures. Using the decomposition $\R P^4 = D^2\widetilde{\times} \R P^2 \cup D^3 \widetilde{\times} S^1$, a bordism is given by \begin{equation}
(Q\# S^2\times S^2 \times [0, 1]) \cup_{\nu(\R P^2)} (\R P^4\times [0, 1]) \cup_{\nu(\gamma)} (S^3\widetilde{\times} S^1 \times [0, 1]).\end{equation} We are denoting by $\nu(\gamma) = D^3\widetilde{\times} S^1$ is a tubular neighborhood of the loop $\gamma \subset S^3 \widetilde{\times} S^1$ that carries the generator of the fundamental group $\pi_1(S^3 \widetilde{\times} S^1) = \Z$, and the boundaries are $\partial(\nu(\R P^2)) = S^2\widetilde{\times} S^1 = \partial (D^3 \widetilde{\times} S^1)$.

Proposition \ref{Proposition Ad}, Theorem \ref{Theorem Stolz}, and Item \ref{Item 3} of this proposition allow us to conclude now that $\eta(A, \pm \phi_A) = \pm 1$ mod $2\Z$.
\end{proof}

Let us now consider non-orientable 4-manifolds $X_1$ and $X_2$ with fundamental group $\pi_1(X_i) \cong \Z/2$ and second Stiefel-Whitney class $\omega_2(X_i) = 0$ for $i = 1, 2$. There are four $Pin^+$-structures on the connected sums, since there is an isomorphism $H^1(X_1\# X_2; \Z/2) \cong \Z/2 \oplus \Z/2$. All four $Pin^+$-structures on the connected sum can be constructed in terms of the two $Pin^+$-structures on $X_i$ (cf. \cite{[KT]}). In particular, there are four values of the $\eta$-invariant to consider in order to use it to discern smooth structures on these connected sums. Using Proposition \ref{Proposition Bord} and Proposition \ref{Proposition Ad} we obtain the following result regarding these values.

\begin{proposition}{\label{Proposition Comp}} Let $X_i$ $i = 1, 2$ be non-orientable 4-manifolds with $\pi_1(X_i) \cong \Z/2$, and $\omega_2(X_i) = 0$. Suppose
\begin{center}
$\eta(X_1, \pm \phi_1) = \pm r_1/s_1$ mod $2\Z$

$\eta(X_2, \pm \phi_2) = \pm r_2/s_2$ mod $2\Z$
\end{center}

for $r_i, s_i \in \Z$, and $Pin^+$-structures $\pm \phi_1$, $\pm \phi_2$ on $X_1$ and $X_2$, respectively. The values of the $\eta$-invariant mod $2\Z$ on the connected sum $X_1\#X_2$ are \begin{equation}\eta(X_1\# X_2, \pm \phi_1\# \pm \phi_2) =  \pm r_1/s_1 \pm r_2/s_2\end{equation} \begin{equation}\eta(X_1\# X_2, \pm \phi_1 \# \mp \phi_2) = \pm r_1/s_2 \mp r_2/s_2.\end{equation}
\end{proposition}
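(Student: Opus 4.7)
The plan is to reduce the claim to a direct combination of the bordism invariance and additivity of the $\eta$-invariant modulo $2\Z$ (Proposition \ref{Proposition Ad}) with the bordism between a connected sum and a disjoint union provided by Proposition \ref{Proposition Bord}.

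First, I would enumerate the four $Pin^+$-structures on $X_1 \# X_2$. Since the set of $Pin^+$-structures on a manifold is a torsor over $H^1(-; \Z/2)$ by \cite{[KT]}, and the connected-sum decomposition together with $\pi_1(X_i) \cong \Z/2$ yields
\begin{center}
$\pi_1(X_1 \# X_2) \cong \Z/2 \ast \Z/2$, so $H^1(X_1 \# X_2; \Z/2) \cong \Z/2 \oplus \Z/2$,
\end{center}
there are exactly four such structures. By restricting any $Pin^+$-structure on $X_1 \# X_2$ to the complement of a point in each summand and extending uniquely to the closed $X_i$, I would identify these four structures with the sign pairs $\epsilon_1 \phi_1 \# \epsilon_2 \phi_2$ for $\epsilon_1, \epsilon_2 \in \{+, -\}$; the assumption $\omega_2(X_i) = 0$ ensures that each $\pm \phi_i$ exists on $X_i$.

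Second, I would invoke Proposition \ref{Proposition Bord} to obtain a $Pin^+$-bordism between $(X_1 \# X_2, \epsilon_1 \phi_1 \# \epsilon_2 \phi_2)$ and the disjoint union $(X_1, \epsilon_1 \phi_1) \sqcup (X_2, \epsilon_2 \phi_2)$. Combining with the bordism invariance and additivity of $\eta$ modulo $2\Z$ from Proposition \ref{Proposition Ad} then yields, for every sign choice,
\begin{center}
$\eta(X_1 \# X_2, \epsilon_1 \phi_1 \# \epsilon_2 \phi_2) = \eta(X_1, \epsilon_1 \phi_1) + \eta(X_2, \epsilon_2 \phi_2)$ mod $2\Z$.
\end{center}
Substituting the hypothesized values $\eta(X_i, \pm \phi_i) = \pm r_i/s_i$ for the same-sign and opposite-sign pairings produces the two displayed formulas of the proposition.

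The main obstacle, such as it is, is purely combinatorial bookkeeping: confirming that the torsor action of $H^1(X_1 \# X_2; \Z/2)$ on $Pin^+$-structures is compatible with the connected-sum gluing, so that flipping the sign of $\phi_i$ on one summand corresponds to acting by a generator of the $i$-th $\Z/2$ factor in the direct-sum decomposition above. Once this identification is in place the computation is immediate, and no new analytic input beyond the two cited propositions is required.
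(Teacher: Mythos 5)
Your proposal is correct and follows essentially the same route as the paper, which simply cites Proposition \ref{Proposition Bord} (connected sum is $Pin^+$-bordant to the disjoint union) and Proposition \ref{Proposition Ad} (additivity of $\eta$ mod $2\Z$), together with the torsor description of the four $Pin^+$-structures on $X_1 \# X_2$ in terms of those on the summands. The only difference is that you spell out the $H^1$-torsor bookkeeping explicitly, which the paper treats as implicit via the reference to \cite{[KT]}.
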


Yet another interesting result of \cite{[S]} its the following topological method that can be used to compute the spectral invariant using the structure introduced in Definition \ref{Definition Equiv}.

\begin{proposition}{\label{Proposition Inv}} \cite[Section 3]{[S]} Let $M$ be a 4-manifold with Riemannian metric $g$, $Pin^+$-structure $\phi$, and a $\Z/2$- equivariant $Pin^+$-structure $(W, P, f, \overline{T})$. Then,
\begin{center}
$\eta(M, g, \phi) = \frac{1}{8} \sum \iota_x$ mod $2\Z$
\end{center}

where $\iota_x$ is the index of Definition \ref{Definition Equiv}, and the sum extends over all fixed points of $\overline{T}$.

\end{proposition}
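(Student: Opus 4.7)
The plan is to prove the formula by exhibiting an explicit $Pin^+$-bordism between $(M,\phi)$ and a disjoint union of copies of $\R P^4$ indexed by the fixed points of $\overline{T}$, and then invoking the value $\eta(\R P^4, \phi) = \pm 1/8$ mod $2\Z$ from Theorem \ref{Theorem Stolz} together with the bordism invariance of the $\eta$-invariant modulo $2\Z$ recalled in Section \ref{Section eta1}.

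First, I would excise the fixed points from $W$. Since $dT_x = -\id$ at each fixed point $x$, a $T$-invariant coordinate neighborhood of $x$ is modeled on the linear antipodal action of $\Z/2$ on $\R^5$. Remove a small open $T$-invariant ball $B_x$ around each fixed point and set $W^{\circ}:= W\setminus \bigsqcup_x B_x$. The involution $T$ acts freely on $W^{\circ}$, so the quotient $\overline{W}:= W^{\circ}/T$ is a smooth $5$-manifold with boundary
\begin{equation*}
\partial \overline{W} = M \sqcup \bigsqcup_x \R P^4,
\end{equation*}
where each boundary component $\R P^4$ is the quotient of a small linking $4$-sphere by the antipodal action. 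The hypothesis that $(W,P,f,\overline{T})$ is a $\Z/2$-equivariant $Pin^+$-structure is precisely what is needed for this equivariant structure to descend to a genuine $Pin^+$-structure on $\overline{W}$ that extends $\phi$ on $M$ and induces a $Pin^+$-structure $\phi_x$ on each boundary $\R P^4$.

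Next I would identify $\phi_x$ in terms of the sign $\iota_x$. Theorem \ref{Theorem Stolz} says that the two $Pin^+$-structures on $\R P^4$ are distinguished by $\eta \equiv +1/8$ versus $\eta \equiv -1/8$ mod $2\Z$. Near the fixed point $x$, the descent of $\overline{T}$ to the boundary link is controlled by its action on the fiber $P_x \cong Pin^+(5)$, which is left-multiplication by $\iota_x \cdot s_5$ by Definition \ref{Definition Equiv}. A direct Clifford algebra computation in $C^+(\R^5)$ then identifies $\phi_x$ with the unique $Pin^+$-structure on $\R P^4$ whose $\eta$-invariant equals $\iota_x/8$ mod $2\Z$. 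Combining this with the bordism $\overline{W}$ from $(M,\phi)$ to $\bigsqcup_x (\R P^4, \phi_x)$, the additivity and bordism invariance recorded in Proposition \ref{Proposition Ad} yield
\begin{equation*}
\eta(M, g, \phi) \equiv \sum_x \eta(\R P^4, \phi_x) \equiv \frac{1}{8}\sum_x \iota_x \;\;\text{mod } 2\Z,
\end{equation*}
which is the claim.

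The main obstacle is the local step of matching $\iota_x$ with the induced $Pin^+$-structure on the $\R P^4$-boundary component. One must verify that the sign distinguishing the two possible values $\pm s_5 \in Pin^+(5)$ in Definition \ref{Definition Equiv} corresponds, under descent to the antipodal quotient of the link sphere, to the sign distinguishing the two $Pin^+$-structures on $\R P^4$ whose $\eta$-invariants compute to $\pm 1/8$. This is a careful but essentially algebraic bookkeeping exercise in the Clifford algebra $C^+(\R^5)$ and the associated spin representation; once settled, the global bordism argument is immediate from the tools already developed in Section \ref{Section eta1}.
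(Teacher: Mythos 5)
The paper gives no proof of this proposition: it is recalled from Stolz \cite[Section 3]{[S]}, where it is derived from the equivariant Atiyah--Patodi--Singer fixed-point theorem for the $Pin^+$-Dirac operator on $W$, the $\iota_x/8$ terms appearing as local defects at the isolated fixed points of $T$. Your proposal is a genuinely different and purely topological argument: excise small equivariant balls around the fixed points, pass to the free quotient $\overline{W}=W^{\circ}/T$, read off a $Pin^+$-bordism from $(M,\phi)$ to $\bigsqcup_x(\R P^4,\phi_x)$, and conclude from the bordism invariance of $\eta$ mod $2\Z$ (\cite[Proposition 4.3]{[S]}, recalled in Section~\ref{Section eta1}) together with the value $\eta(\R P^4,\pm\phi)=\pm 1/8$ of Theorem~\ref{Theorem Stolz}. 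This is sound and not circular, since Gilkey's analytic computation of $\eta(\R P^4)$ and the bordism invariance of $\eta$ are both logically independent of the fixed-point formula; your route trades a global analytic theorem for a single model computation on $\R P^4$, which is an efficient shortcut given the tools already assembled in the paper, whereas the analytic proof has the merit of \emph{deriving} the local $\pm 1/8$ rather than importing it.

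The sign bookkeeping you flag and defer is a real gap rather than a cosmetic one, because $+1/8$ and $-1/8$ are distinct mod $2\Z$. Two things must be pinned down. First, you should verify that $P|_{W^{\circ}}/\overline{T}$, with the descended isomorphism $f$, is genuinely a $Pin^+$-structure on $T\overline{W}$ restricting to $\phi$ on the boundary component $M$; this uses the full commutative square of Definition~\ref{Definition Equiv} and the requirement there that $\phi_W$ induce $\widetilde\phi$, not merely the freeness of $\overline{T}$ over $W^{\circ}$. Second, the $Pin^+$-bordism relation you obtain reads $[M,\phi]+\sum_x[\R P^4,\phi_x]=0$ in $\Omega_4^{Pin^+}$, so the $\R P^4$ components enter with the opposite structure, and this sign must be compounded with the Clifford-algebra identification of $\pm s_5$ with Gilkey's normalization of $\eta(\R P^4,\pm\phi)=\pm 1/8$. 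These cancellations do work out, but they are precisely where a careless treatment would produce $-\frac{1}{8}\sum\iota_x$ instead of the stated formula.
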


The following lemma summarizes computations of the $\eta$-invariant mod $2\Z$ for the corresponding $Pin^+$-structures that are needed to prove Theorem \ref{Theorem M}.

\begin{lemma}{\label{Lemma Values}} 
\begin{equation}{\label{Item L1}}\eta(S^3\widetilde{\times} S^1, \pm \phi) = 0\end{equation}\begin{equation}{\label{Item L2}} \eta(Kb\times S^2, \phi_i) = 0\end{equation}\begin{equation}{\label{Item L3}} \eta(\xi_3, \phi_i') = 0\end{equation}
\begin{equation}{\label{Item L4}}\eta(Kb\times T^2, \phi_j') = 0\end{equation} for $i = 1, 2, 3, 4$ and $j = 1, 2, \ldots, 15, 16$.
\end{lemma}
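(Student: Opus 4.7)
My plan is to reduce each assertion to producing a $Pin^+$-nullbordism of the four-manifold for every $Pin^+$-structure on it; by the bordism invariance of $\eta$ mod $2\Z$ recorded in the first item of Proposition \ref{Proposition Ad}, this will force $\eta \equiv 0$ mod $2\Z$. Item \ref{Item L1} was effectively established already, in the body of the proof of Item \ref{Item 3} of Proposition \ref{Proposition Values} (via the diffeomorphism $\R P^4 \#_{S^1} S^3 \widetilde\times S^1 \cong \R P^4$, the additivity of $\eta$ under circle sums, and Theorem \ref{Theorem Stolz}); I would simply quote that computation verbatim.

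For Items \ref{Item L2} and \ref{Item L3} I would realize each four-manifold as the boundary of a disc-bundle over the Klein bottle: $Kb \times S^2 = \partial(Kb \times D^3)$, and $\xi_3 = \partial D(\eta_3)$, where $\eta_3 \to Kb$ is the rank-three bundle whose unit sphere bundle is $\xi_3$. Both bounding five-manifolds deformation-retract to $Kb$ and hence satisfy $w_2 = 0$ (using $w_2(Kb) = 0$ together with the standing hypothesis $w_2(\xi_3) = 0$), so they admit a $Pin^+$-structure. The key observation is that $H^1(S^2;\Z/2) = 0$, so the Serre spectral sequence gives an isomorphism $H^1(W;\Z/2) \cong H^1(\partial W;\Z/2)$; every $Pin^+$-structure on the boundary therefore extends across the bounding disc-bundle, producing the desired nullbordism.

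Item \ref{Item L4} is the delicate one. Here $Kb \times T^2 = \partial(Kb \times D^2 \times S^1)$, but the restriction map on $H^1$ is no longer surjective, and a single bordism accounts for only eight of the sixteen $Pin^+$-structures. I would combine this with the symmetric bordism $Kb \times T^2 = \partial(Kb \times S^1 \times D^2)$, exchanging the roles of the two circle factors of $T^2$, to cover the remaining structures. Alternatively, via Proposition \ref{Proposition Inv} on the orientation double cover, one bounds $T^2 \times T^2 = \partial(T^2 \times D^2 \times S^1)$ and extends the orientation-reversing deck involution by the identity on $D^2 \times S^1$; this extension is free, so the sum of fixed-point indices is empty and $\eta \equiv 0$ mod $2\Z$. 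The main obstacle I foresee is precisely the bookkeeping in Item \ref{Item L4}: checking that every one of the sixteen $Pin^+$-structures on $Kb \times T^2$ is accounted for by some product bordism or equivariant extension is where the greatest care is required.
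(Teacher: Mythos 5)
Your strategy for Items \ref{Item L1}--\ref{Item L3} is sound and essentially matches the paper, with the minor difference that the paper also offers parallel computations via $\Z/2$-equivariant $Pin^+$-structures (Proposition \ref{Proposition Inv}) and, for Item \ref{Item L2}, splits into two cases (using $D^2\widetilde{\times}S^1\times S^2$ for two of the structures and $Kb\times D^3$ for the others, invoking the $Pin^+$-cobordance of $\phi_3$ and $\phi_4$ from \cite{[KT]}). Your observation that the single bordism $Kb\times D^3$ already covers all four structures, because the restriction $H^1(Kb\times D^3;\Z/2)\to H^1(Kb\times S^2;\Z/2)$ is an isomorphism, is correct and tidier. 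For Item \ref{Item L3} you should spell out, rather than just invoke the ``standing hypothesis'', why $w_2(D(\eta_3))=0$: the Gysin sequence shows the restriction $H^2(D(\eta_3);\Z/2)\to H^2(\xi_3;\Z/2)$ is injective, and since $T\xi_3\oplus\R\cong TD(\eta_3)|_{\xi_3}$, the vanishing of $w_2(\xi_3)$ pulls back to $w_2(D(\eta_3))=0$. This is a small but real gap to close.

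The genuine problem is Item \ref{Item L4}, exactly where you flagged trouble. Your two product bordisms $Kb\times D^2\times S^1$ and $Kb\times S^1\times D^2$ cover the $Pin^+$-structures where at least one $S^1$-factor of $T^2$ carries the $D^2$-bounding structure; by the torsor structure of $Pin^+$-structures over $H^1(-;\Z/2)$ this is $8+8-4=12$ of the $16$ structures, leaving uncovered the four where both circle factors carry the non-$D^2$-bounding structure. The paper closes this gap by citing \cite[p.\ 206]{[KT]}: $\Omega_1^{Pin^+}=0$, so \emph{both} $Pin^+$-structures on $S^1$ bound some compact surface $W_\phi$, not necessarily $D^2$. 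Substituting such a $W_\phi$ for $D^2$ in your product bordism handles the remaining four. Your equivariant alternative via Proposition \ref{Proposition Inv} on $W=T^2\times D^2\times S^1$ suffers from the same counting issue: the $Pin^+$-structures on $W$ that are compatible with the deck transformation restrict to only a proper subset of the sixteen $\widetilde\phi$'s, so you would again need to vary the bounding surface; simply saying the extended involution is free does not by itself account for all sixteen structures. So the missing ingredient is the vanishing of $\Omega_1^{Pin^+}$, which is the key fact the paper invokes at this point.
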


\begin{proof} We begin with the proof of Item \ref{Item L1}, which has already been proven in Proposition \ref{Proposition Values}. We give here a different proof using $\Z/2$-equivariant $Pin^+$-structures. The quadruple of Definition \ref{Definition Equiv} is given as follows. The orientation  double cover $S^3\times S^1$ is the boundary of $W:= S^3\times D^2$ with tangent bundle $T(S^3\times D^2) = S^3 \times D^2 \times D^5$. There is a natural vector bundle isomorphism between this bundle and $E(\sigma^+)$. Consider the involution $T: S^3\times D^2 \longrightarrow S^3\times D^2$ given by $(x, y)\mapsto (r(x), -y)$, where $r: S^3\longrightarrow S^3$ is reflection in a hyperplane cf. \cite[p. 2]{[JK]}. This involution can be extended to an involution $\overline{T}$ on the principal bundle $P = S^3\times D^2 \times Pin^+(5)$ by multiplying from the left by $s_5 = e_1\cdots e_5$ on the third factor. The involution $\overline{T}$ has no fixed points. Proposition \ref{Proposition Inv} implies $\eta(S^3\widetilde{\times} S^1, \phi) = 0$ mod $2\Z$. Moreover, $\eta(S^3\widetilde{\times} S^1, \phi) = -  \eta(S^3\widetilde{\times} S^1, - \phi)$ mod $2\Z$, and the claim follows.
\smallskip\smallskip

We now prove Item \ref{Item L2}. There are four choices of $Pin^+$-structures to be considered given that $H^1(Kb\times S^2; \Z/2) \cong \Z/2\oplus \Z/2$; they arise from the four $Pin^+$-structures on the Klein bottle, and the one on the 2-sphere. We abuse notation and denote by $\phi_i$ the $Pin^+$-structure on $Kb\times S^2$ that arises from the corresponding structure $\phi_i$ on $Kb$ for $i = 1, 2, 3, 4$.

Consider $Kb$ as the twisted $S^1$-bundle over $S^1$, $S^1\widetilde{\times} S^1$. Two of the $Pin^+$-structures, say $\phi_1$ and $\phi_2$, extend over the twisted $D^2$-bundle over $S^1$, $D^2\widetilde{\times} S^1$ \cite[p. 206]{[KT]}. Hence, $(Kb\times S^2, \phi_1)$ and $(Kb\times S^2, \phi_2)$ are $Pin^+$-boundaries of $D^3\widetilde{\times} S^1 \times S^2$, and $\eta(Kb\times S^2, \phi) = 0$ mod $2\Z$ for $i = 1, 2$. Although the other two $Pin^+$-structures on $Kb$, $\phi_3$ and $\phi_4$, do not $Pin^+$-bound, they are $Pin^+$-cobordant to each other \cite[p. 206]{[KT]}. In this case, $(Kb\times S^2, \phi_3)$ is the $Pin^+$-boundary of $Kb\times D^3$, and we conclude $\eta(Kb\times S^2, \phi) = 0$ mod $2\Z$ for $i = 3, 4$.

Analogously, Proposition \ref{Proposition Inv} can be used to calculate the $\eta$-invariant as follows. Take $(Kb\times S^2, \phi_3)$ with orientation double cover  $S^1\times S^1 \times S^2 = S^1\times S^1 \times S^2$ bounding $W = S^1\times S^1\times D^3$. There is a natural vector bundle isomorphism between $T(S^1\times S^1\times D^3)$ and $E(\sigma^+)$. The involution $T$ is given as reflection on the first circle factor, rotation by 180 degrees on the second circle factor, and the identity on the third 3-disk factor. This involution is the canonical product extension that arises from the 2-cover $T^2\rightarrow Kb$, and it has no fixed points. The involution $T$ can be extended to $P = S^1\times S^1 \times D^3 \times Pin^+(5)$ by left-multiplication with $s_{5} = e_1 \cdot \ldots e_5$ on the $Pin^+(5)$ factor, thus obtaining the involution $\overline{T}$ of the quadruple in Definition \ref{Definition Equiv}. Since $\overline{T}$ has no fixed points, Proposition \ref{Proposition Inv} implies $\eta(Kb\times S^2, \phi_3) = 0$ mod $2\Z$.

\smallskip\smallskip

Regarding Item \ref{Item L3}, the computation of the $\eta$-invariant for total space $\xi_3$ of the non-trivial non-orientable $S^2$-bundle over $Kb$ that admits a $Pin^+$-structure is analogous to the one for $Kb\times S^2$ given in Item \ref{Item L2}. To appeal to Proposition \ref{Proposition Inv}, one uses the description of $\xi_3$ given in \cite[Proof of Theorem 10.11]{[H]} that we now recall. Consider the elements $(A, \beta, C)$ of the group $Isom(S^2\times \R^2) = O(3) \times (\R^2 \rtimes O(2))$ such that an isometry sends $(v, x)\in S^2\times \R^2$ to $(Av, Cx + \beta)$, and let $\vec{i} = (1, 0)$ and $\vec{j} = (0, 1)$. The manifold $\xi_3$ is realized as a quotient using the isometries $(A, \frac{1}{2}\vec{i}, C_A)$ and $(B, \vec{j},C_B)$ with $A = C_B = I$ the identity, 
 \[ C_A=\left( \begin{array}{cc}
1 & 0  \\
0 & -1 
\end{array}\right),\] 

and $B$ is the antiholomorphic involution $R: \mathbb{CP}^1 \rightarrow \mathbb{CP}^1$ that assigns $z \mapsto - \overline{z}$ for $z\in \mathbb{CP}^1$ once the usual identification $S^2 = \C \cup \{\infty\} = \mathbb{CP}^1$ takes place. From the description of $\xi_3$ we see that the involution $T: D^3\times S^1\times S^1$ has no fixed points. 

\smallskip\smallskip

We now prove Item \ref{Item L4}. There are sixteen choices of $Pin^+$-structures to be considered, since $H^1(Kb\times T^2; \Z/2) \cong \Z/2\oplus \Z/2\oplus \Z/2 \oplus \Z/2$, and they arise from the four $Pin^+$-structures on the Klein bottle $\{\phi_1, \phi_2, \phi_3, \phi_4\}$ and the four $Pin^+$-structures on the 2-torus $\{\phi_1', \phi_2', \phi_3', \phi_4'\}$. We denote the $Pin^+$-structure on $Kb\times T^2$ by $\phi_l\times \phi_m'$ for $l, m = 1, 2, 3, 4$.

As it was shown in the proof of Item \ref{Item L1}, $\eta(Kb\times T^2, \phi_l\times \phi_m') = 0$ mod $2\Z$ for $i = 1, 2$ and $m = 1, 2, 3, 4$, since $Kb\times T^2$ is the $Pin^+$-boundary of $D^2\widetilde{\times} S^1\times T^2$ for these eight structures. In more generality, since both $Pin^+$-structures on the circle $Pin^+$-bound \cite[p. 206]{[KT]}, so do all $Pin^+$-structures on $T^2 = S^1\times S^1$, and the $\eta$-invariant mod $2\Z$ vanishes for all choices of $Pin^+$-structures.
\end{proof}

\section{Construction of inequivalent smooth structures on non-orientable 4-manifolds}{\label{Section Exotica}}

In this section, we prove Theorem \ref{Theorem 0}, Theorem \ref{Theorem M}, Corollary \ref{Corollary M}, as well as Theorem \ref{Theorem Pr} modulo the claim regarding Gluck twists. Constructions of inequivalent smooth structures on non-orientable 4-manifolds can be found in \cite{[CS], [FS], [AO], [FS1], [K], [AT], [S]}.

\subsection{Proof of Theorem \ref{Theorem 0}}{\label{Section 10}} The result is a corollary of the work done in \cite{[S]} that we have cited in Theorem \ref{Theorem Stolz2} and Proposition \ref{Proposition Values}. Let $M$ be either a circle sum $\#_{S^1}r\cdot \R P^4$ for $r = 1, 2, 3$ or the 2-sphere bundle over the real projective plane $S(2\gamma \oplus \R)$.The inequivalent smooth structure is built as the circle sum $M\#_{S^1} M_A$ described in Section \ref{Section FRP}. Lemma \ref{Lemma Homeo} shows that the construction does not change the homeomorphism class of the manifold. Proposition \ref{Proposition Values} now says that the values of the $\eta$-invariant are different from $\pm \frac{1}{2}$ for these manifolds, and Theorem \ref{Theorem Stolz2} applies. Proposition \ref{Proposition Ad} implies that smooth structures remain inequivalent under connected sums with an arbitrary number of copies $S^2\times S^2$.\\

The claim regarding the existence of a diffeomorphism after taking the connected sum with a single copy of $\mathbb{CP}^2$ is a corollary of \cite[Theorem 2]{[AO]} as we now explain. The analysis of the handlebody structure of $Q$ \cite[Section 2]{[AO]} indicates that it can be decomposed as $N\cup D^3\widetilde{\times} S^1$, where $N$ is an exotic $D^2\widetilde{\times} \R P^2$. Moreover, $N\# \mathbb{CP}^2$ is diffeomorphic to $D^2\widetilde{\times} \R P^2\# \mathbb{CP}^2$ \cite[Section 3]{[AO]}. There is a diffeomorphism $(\R P^4 \#_{S^1} \R P^4) \#_{S^1} M_A$ and $\R P^4 \#_{S^1} Q$, where the latter circle sum glues $N$ to the complement of the twisted $D^3$ bundle over $S^1$ inside $\R P^4$. Hence, $(\#_{S^1}2\cdot \R P^4\#_{S^1} M_A) \# \mathbb{CP}^2$ is diffeomorphic to $(\#_{S^1} 2\cdot \R P^4) \# \mathbb{CP}^2$. The claim for the other circle sums follows from induction on the copies of $\R P^4$ used in the circle sum. The claim for the 2-sphere bundle over the real projective plane also follows from \cite{[AO]} once it expressed as the circle sum $\R P^4 \#_{S^1} \overline{\R P^4}$ as in was done for the proof of Proposition \ref{Proposition Values}. 

\subsection{Item (1) of Theorem \ref{Theorem M}, Corollary \ref{Corollary M}, and Theorem \ref{Theorem Pr}} Let us begin by showing Item (1) of Theorem \ref{Theorem M}. As it was mentioned in the introduction, it has been shown in \cite{[AO]} that for every $k\in \N$, there exists a manifold $A_k$ that is homeomorphic but not diffeomorphic to \begin{equation}S^3\widetilde{\times} S^1 \# k(S^2\times S^2),\end{equation} and we labelled $A:=A_1$. Proposition \ref{Proposition Values} shows that the values of the $\eta$-invariant mod $2\Z$ for the smooth structures are indeed different for both $Pin^+$-structures.\\

Corollary \ref{Corollary M} follows from \cite{[AO]}, using the homeomorphism classification in \cite{[W], [QK]}. Indeed, it is shown in \cite[Theorem 2]{[W]} that any closed smooth non-orientable 4-manifold $M$  with infinite cyclic fundamental group and $\omega_2(M) = 0$ is stably homeomorphic to a connected sum of $S^3\widetilde{\times} S^1$ with copies of $S^2\times S^2$. That is, there is a homeomorphism \begin{equation}M \# r(S^2\times S^2) \cong_{C^0} S^3\widetilde{\times} S^1 \# r(S^2\times S^2).\end{equation}The minimal number of stabilizations is $r = 3$ by \cite{[QK]} (cf. \cite{[CSixt]}). This is satisfied by manifolds with Euler characteristic at least six.\\

Regarding the proof of Theorem \ref{Theorem Pr} modulo the claim concerning Gluck twists, we have the following. Let $X$ be a closed smooth non-orientable 4-manifold with a $Pin^+$-structure. The manifold $Y$ is the circle sum \begin{equation} Y:= X\#_{S^1} A,\end{equation} where $A$ is the exotic manifold constructed in \cite{[AO]} (see Section \ref{Section 10}). The manifold $Y$ is homeomorphic to $X\# (S^2 \times S^2)$ by construction, since $A$ is homeomorphic to $S^3\widetilde{\times} S^1 \# S^2\times S^2 = (D^3\widetilde{\times} S^1 \cup D^3\widetilde{\times} S^1) \# S^2\times S^2$. Proposition \ref{Proposition Ad} and Proposition \ref{Proposition Values} imply the claim regarding the value of the $\eta$-invariant of $Y$. The claim regarding Gluck twists is proven in Section \ref{Section Gluck}.

\subsection{Proof of Theorem \ref{Theorem M}} The inequivalent smooth structures of Items (1) - (4) of Theorem \ref{Theorem M} are constructed as an application of Theorem \ref{Theorem Pr} to circle sums $Y_k:= X\#_{S^1} A_k$, where $A_k$ is the exotic manifold constructed in \cite{[AO]} (see Section \ref{Section 10}) for $k\in \N$, and $X$ is either $Kb\times S^2, \xi_3$ or $Kb\times T^2$. The inequivalent smooth structure of Item (5) is constructed using those of Theorem \ref{Theorem 0}. Proposition \ref{Proposition Values}, Lemma \ref{Lemma Values}, and Proposition \ref{Proposition Comp} imply the following result.

\begin{proposition} There exists a manifold that is homeomorphic but not diffeomorphic to \begin{equation} Kb\times S^2\# k(S^2\times S^2),\end{equation}\begin{equation}\xi_3 \# k(S^2\times S^2),\end{equation}\begin{equation}T^2\times Kb\#k(S^2\times S^2),\end{equation} and \begin{equation} X_1\#X_2\end{equation}

for every $k\in \N$. The smooth structures are discerned by the $\eta$-invariant.
\end{proposition}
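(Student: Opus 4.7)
The plan is to build each exotic copy explicitly and then distinguish it from the standard smooth structure by comparing the complete set of values that $\eta \!\mod 2\Z$ attains over all $Pin^+$-structures. For the first three manifolds the exotic copy is produced by the circle-sum construction of Theorem \ref{Theorem Pr} with the Akbulut--Ostrik manifold $A_k$; for $X_1\#X_2$ the exotic copy is obtained by connect-summing with the exotic analogue of one of the factors provided by Theorem \ref{Theorem 0}. The computational ingredients are Proposition \ref{Proposition Values}, Lemma \ref{Lemma Values}, Proposition \ref{Proposition Comp}, and the additivity statements of Proposition \ref{Proposition Ad}.

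For $X\in\{Kb\times S^2,\xi_3,Kb\times T^2\}$ I set $Y_k:=X\#_{S^1}A_k$. Since $A_k$ is topologically $S^3\widetilde{\times}S^1\#k(S^2\times S^2)$ and since removing a tubular neighborhood of the generating circle from $S^3\widetilde{\times}S^1$ leaves $D^3\widetilde{\times}S^1$, the circle sum $Y_k$ is homeomorphic to $X\#k(S^2\times S^2)$. On the standard side, Lemma \ref{Lemma Values} gives $\eta(X,\phi_X)\equiv 0\!\mod 2\Z$ for every $Pin^+$-structure on $X$; combined with the vanishing of $\eta(S^2\times S^2,\phi)$ (Spin, zero signature) via Proposition \ref{Proposition Ad}, this yields $\eta(X\#k(S^2\times S^2),\phi)\equiv 0\!\mod 2\Z$ for every $Pin^+$-structure. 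On the exotic side, circle-sum additivity and Proposition \ref{Proposition Values}\,(\ref{Item 4}) give, for a suitable $Pin^+$-structure on $Y_k$,
\[
\eta(Y_k,\phi_X\#_{S^1}\phi_{A_k})=\eta(X,\phi_X)+\eta(A_k,\phi_{A_k})\equiv \pm 1\!\mod 2\Z.
\]
Because $\pm 1\not\equiv 0\!\mod 2\Z$, the sets of $\eta$-values realized by the two smooth structures are distinct, and no diffeomorphism between them can exist.

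For $X_1\#X_2$, let $X_1'$ denote the exotic copy of $X_1$ provided by Theorem \ref{Theorem 0}; the homeomorphism $X_1\to X_1'$ induces a homeomorphism $X_1\#X_2\to X_1'\#X_2$ by connect-summing with the identity on $X_2$, so only the inequivalence of smooth structures is at issue. By Proposition \ref{Proposition Values} the $\eta$-values of $X_1$ and $X_2$ are of the form $\pm r_i/8$ with $r_i\in\{0,1,2,3\}$, while those of $X_1'$ are of the form $\pm r_1'/8$ with $r_1'\in\{5,6,7,8\}$ and $r_1+r_1'\equiv 8\!\mod 16$. Proposition \ref{Proposition Comp} then describes the complete set of $\eta$-values on each side as
\[
\bigl\{\pm(r_1\pm r_2)/8\bigr\}\quad\text{and}\quad\bigl\{\pm(r_1'\pm r_2)/8\bigr\},
\]
and a finite enumeration of admissible numerators confirms that these two subsets of $\R/2\Z$ are distinct.

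The genuinely non-automatic step is the arithmetic check in the last case: one must verify that the substitution $r_1\mapsto r_1'=8-r_1$ cannot conspire with the fixed $r_2\in\{0,1,2,3\}$ and the available sign changes to reproduce the same four-element subset of $\R/2\Z$. Since the shift is by $\pm 1\!\mod 2\Z$, the numerators have magnitude at most $11$, and the values $\pm r_i/8$ steer clear of the threshold $\pm 1/2$ of Theorem \ref{Theorem Stolz2}, no collision occurs; this elementary bookkeeping is the only substantive point in the argument.
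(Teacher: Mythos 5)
Your proof is correct and follows the same route as the paper: form $Y_k := X\#_{S^1}A_k$ for the first three families and use Proposition \ref{Proposition Ad}, Proposition \ref{Proposition Values}, and Lemma \ref{Lemma Values} to separate the $\eta$-values ($0$ on the standard side, $\pm1$ on the exotic side); for $X_1\#X_2$, connect-sum with an exotic factor from Theorem \ref{Theorem 0} and compare via Proposition \ref{Proposition Comp}. The paper states this in a single paragraph and leaves the arithmetic implicit, so your write-up essentially fills in the verification.

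One small clarification worth making in the last paragraph: the invocation of the ``$\pm 1/2$ threshold of Theorem \ref{Theorem Stolz2}'' is not the right lens for the final enumeration — that threshold governs when the circle-sum with $M_A$ changes $\eta$ (and hence when the exotic $X_1'$ exists at all), not whether the value sets of $X_1\#X_2$ and $X_1'\#X_2$ coincide. The cleaner observation is that the value set of $X_1'\#X_2$ equals that of $X_1\#X_2$ shifted by $1$ modulo $2\Z$, and since $r_1,r_2\in\{0,1,2,3\}$ every element of the unshifted set lies in $\{0\}\cup\pm(0,3/4]$, which is not invariant under a $+1$ shift; hence the two sets never coincide. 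The conclusion is the same, but this makes the ``finite enumeration'' step transparent.
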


\begin{remark} There are infinitely many non-orientable closed topological 4-manifolds homotopy equivalent to a connected sum that are not homeomorphic to a trivial connected sum as in Item (5) of Theorem \ref{Theorem M} \cite{[C1], [JK1], [BDK]}. It is proven in \cite[Theorem 3.4]{[QK]} (cf. \cite{[CSixt]}) that there exists a unique homeomorphism class after stabilization with at least three copies of $S^2\times S^2$, i.e., for \begin{equation}X_1\# X_2 \# 3(S^2\times S^2). \end{equation}
\end{remark}

\section{Non-orientable handlebodies and Gluck twists}{\label{Section Hand}}

We build greatly upon \cite{[AO], [A], [AL]} to prove the results in this section.

\subsection{Handlebody of exotic manifolds of Theorem \ref{Theorem 0}} 
The handlebody structure of an exotic $\R P^4$ was analyzed in \cite{[AO]}, where it is shown that it decomposes as the circle sum of an exotic 2-disk bundle over $\R P^2$ and $D^3 \widetilde{\times} S^1$. The construction of exotic manifolds of Theorem \ref{Theorem 0} can be expressed as blowing up an $\R P^2$ as in \cite[Section 0]{[AO]}, thus we construct their handlebodies building greatly upon \cite{[AO]}. The handlebody of $S(2\gamma \oplus \R) = D^2 \widetilde{\times} \R P^2 \cup D^2 \widetilde{\times} \R P^2$ is given in Figure 1 on page 14. The same figure with the $p_2$- and $q_2$- framed 2-handle removed yields a handlebody of $D^2 \widetilde{\times} \R P^2$, provided $p_i + q_i$ is odd \cite[Section 0]{[AO]}.\\

A handlebody of an exotic copy of $S(2\gamma \oplus \R)$ is given in Figure 2 on page 14, which is constructed by turning the handlebody of $D^2\widetilde{\times} \R P^2$ upside down and adjoining it to the handlebody of the exotic 2-disk bundle over the real projective plane constructed in \cite[Figure 1.33]{[AO]}. Different choice of gluing map between the common boundary of $D^2\widetilde{\times} \R P^2$ and its exotic copy yields a handlebody for the inequivalent smooth structures on the manifolds $\#_{S^1} r \cdot \R P^4$.

\begin{figure}{\label{Figure 1}}
\begin{center}
\includegraphics[viewport= 430 150 150 730]{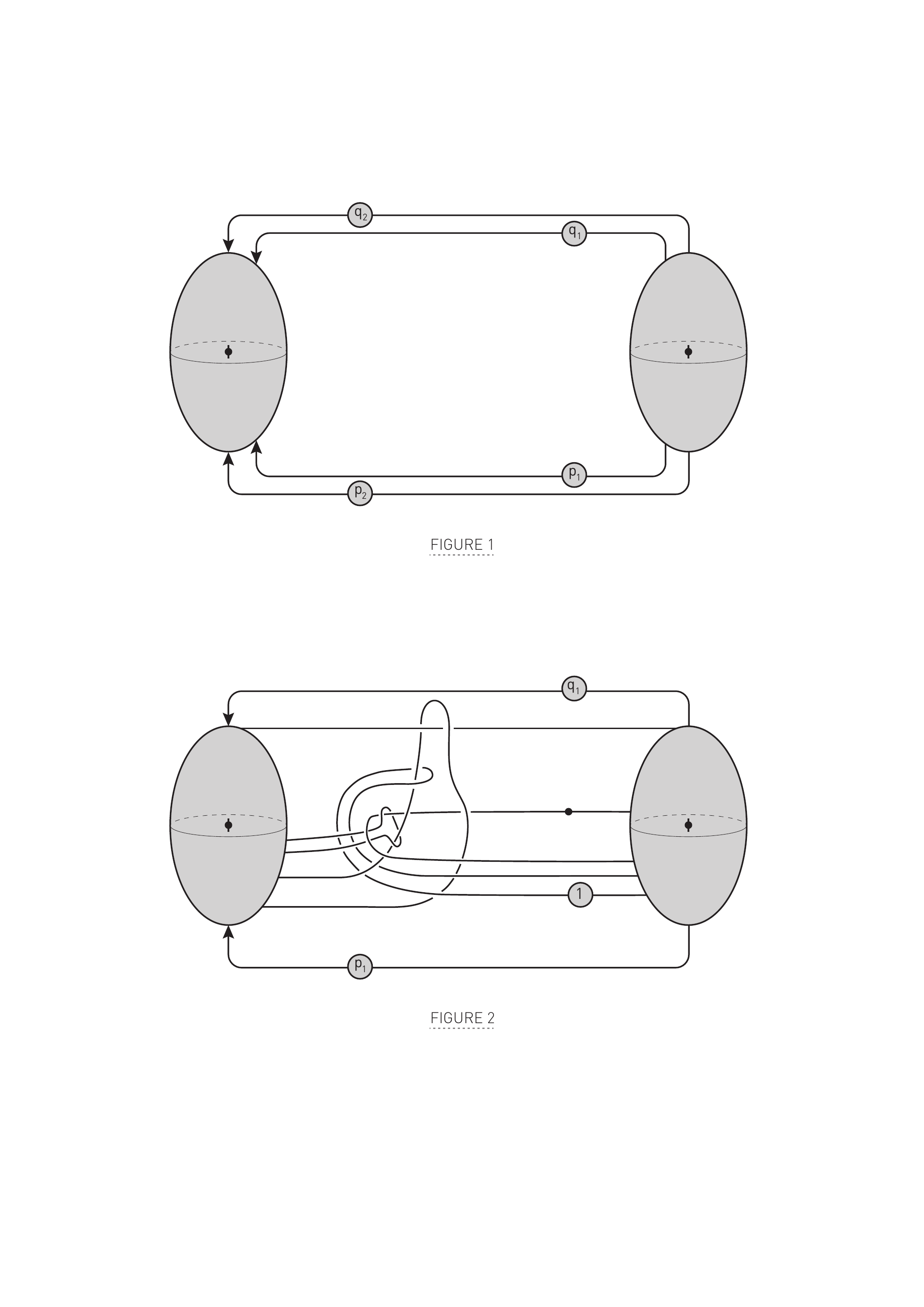}
\end{center}
\end{figure}

\subsection{Gluck twists and the proof of Theorem \ref{Theorem Pr}}{\label{Section Gluck}} The claim regarding Gluck twists of Theorem \ref{Theorem Pr} is a corollary of the study done in \cite{[AT]} of the handlebody structure of $A$. It was shown in that paper that applying a Gluck twist to an embedded 2-sphere in $S^3\widetilde{\times} S^1 \# S^2\times S^2$ produces the exotic manifold $A$ that was constructed in \cite{[AO]}. This is proven by constructing a handlebody of $A$ as in Figure 2 on page 15 with the $p_1$- and $q_1$- framed 2-handles removed \cite[Figure 7]{[AT]}: the dotted slice knot and the 2-handle with -1 framing expose the Gluck twist. We follow the notation in \cite{[AO]} for non-orientable 1-handles, and that of \cite{[A0]} for the orientable 1-handles.\\

\begin{figure}{\label{Figure 2}}
\begin{center}
\includegraphics[viewport= 430 150 150 730]{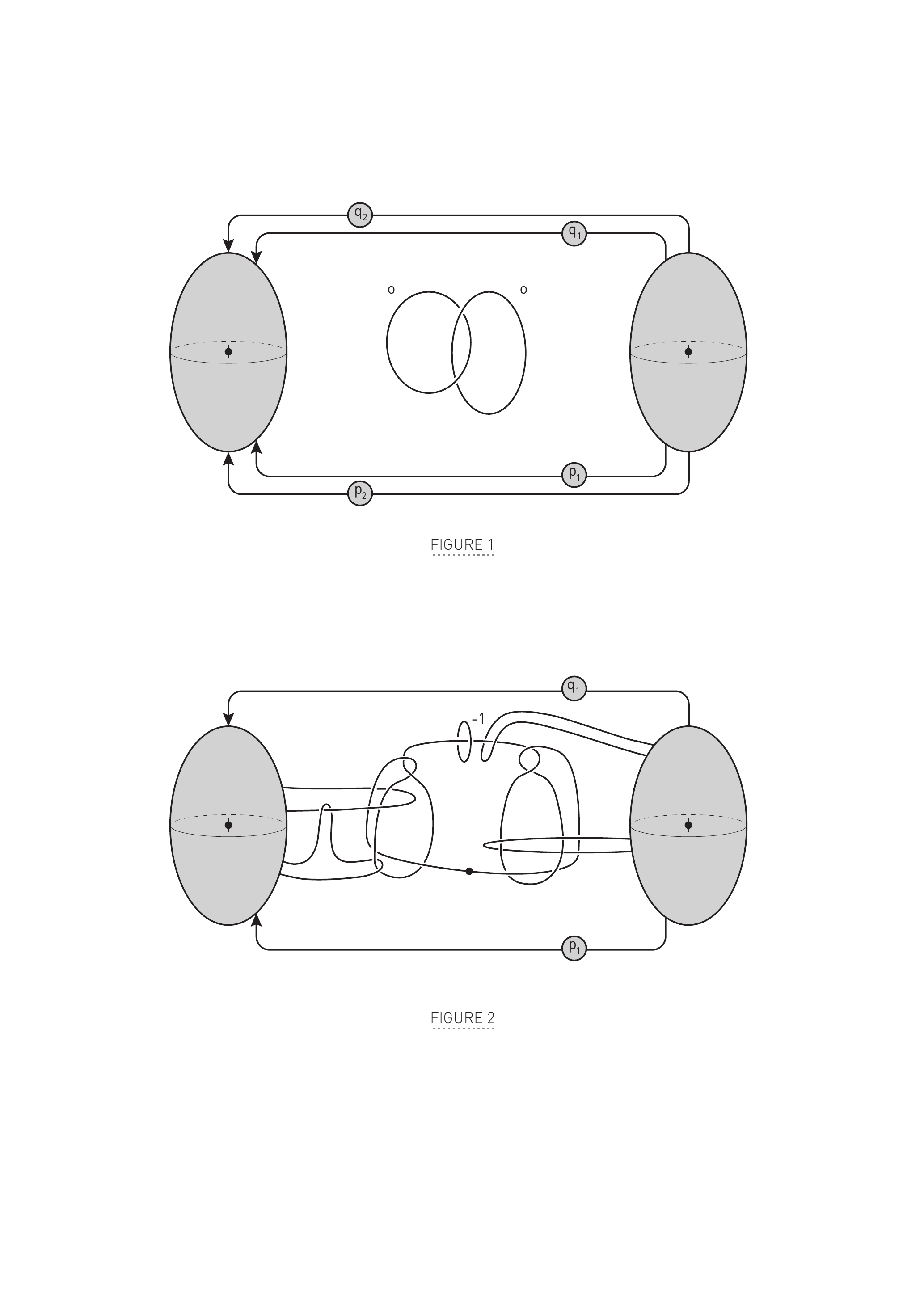}
\end{center}
\end{figure}

There is a handlebody of a circle sum of Definition \ref{Definition CS} of the manifold $A$ with a non-orientable 4-manifold $X$, which contains the very same slice knotted 1-handle and the -1 framed 2-handle by construction. Indeed, let  \begin{center}$Y:= (X - D^3 \widetilde{\times} S^1) \cup A_0$,\end{center} where $A_0: = A - D^3\widetilde{\times} S^1$ is an exotic $D^3\widetilde{\times} S^1 \# S^2\times S^2$. We glue along the common boundary $S^2\widetilde{\times} S^1$, the total space of the twisted $S^2$-bundle over $S^1$, and we take the identity map as gluing diffeomorphism. A handlebody of $Y$ is obtained by turning the handlebody of $X - D^3 \widetilde{\times} S^1$ upside down (cf. \cite[Example 5.5.5]{[GS]}), and joining it with $\partial A_0$ of Figure 2 on page 15 with the $p_1$- and $q_1$- framed 2-handles removed (cf. \cite[Example 5.5.8]{[GS]}).\\

Figure 2 on page 15 exemplifies the procedure on $S(2\gamma \oplus \R) \# S^2\times S^2$, where $\gamma$ denotes the real Hopf bundle over the real projective plane $\R P^2$. The total space of this non-orientable $S^2$-bundle over $\R P^2$ can be constructed as the double of the twisted disk bundle over the real projective plane, which is denoted by $D^2\widetilde{\times} \R P^2$. A handlebody of $D^2\widetilde{\times} \R P^2$ is given as Figure 1 with the linked  0-framed 2-handles and the $p_2$- $q_2$-framed 2-handles removed \cite[Section 0]{[AO]}. A handlebody of $S(2\gamma \oplus \R) \# S^2\times S^2$ is given in Figure 1, where $p_i + q_i$ is assumed to be odd.  Turning the handlebody of $D^2\widetilde{\times} \R P^2$ upside down, yields a handlebody that consists of one 2-handle, one 3-handle, and one 4-handle. Attaching these handles to $\partial A_0$ yields an exotic manifold obtained by a Gluck twist as illustrated in Figure 2 on page 15.\\

Using $-id$ as attaching map in the previous construction yields the examples of exotic smooth structures on $\#_{S^1} r \cdot \R P^4 \# S^2\times S^2$.

\section{Smooth structures on the orientation 2-covers and orientation-reversing $G$-involutions}{\label{Section Inv}}

We now show that the smooth structure of the orientation 2-covers of the manifolds constructed is standard, yielding examples of exotic $G$-involutions in the sense of Definition \ref{Definition Inv} for $G = \Z/2, \Z$, $\Z/2\ast \Z/2$, $\Z\rtimes \Z$, and $\Z\times \Z\times \Z\rtimes \Z$.

\subsection{Inequivalent $\Z/2$-involutions on connected sums of $S^2\times S^2$} Examples of such fixed point free involutions on connected sums of an even number of copies of $S^2\times S^2$ have been previously constructed in \cite{[FS], [CS], [G0], [A]}. The following proposition proves Item (1) of Theorem \ref{Theorem Inv} given the existence of a diffeomorphism \begin{equation}Q\#_{S^1} \R P^4 \rightarrow (\R P^4 \#_{S^1} \R P^4) \#_{S^1} M_A.\end{equation}

\begin{proposition}{\label{Proposition S}} The universal cover $\pi: \widetilde{M}_{n - 1}\overset{\pi}{\longrightarrow} Q\#_{S^1}(\#_{S^1}(n - 1)\cdot \R P^4)$ is diffeomorphic to the connected sum $(n - 1)(S^2\times S^2)\# S^4$ for $n\in \N$. The same conclusion holds for circle sums of copies of $Q$, and for hybrid circle sums of arbitrary number of copies of $Q$ and $\R P^4$.
\end{proposition}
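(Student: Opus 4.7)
The plan is to prove the statement by induction on the number of circle-summands, with Gompf's theorem $\widetilde{Q}\cong S^{4}$ \cite{[G0]} as the base case. Set $W_{n}:=Q\#_{S^{1}}(\#_{S^{1}}(n-1)\cdot \R P^{4})$. Because every factor in the circle sum has fundamental group $\Z/2$, the Seifert--van Kampen argument recalled in Section~\ref{Section CSP} gives $\pi_{1}(W_{n})\cong \Z/2$ for every $n\in\N$, so the universal cover of $W_{n}$ coincides with its orientation double cover $\widetilde{W_{n}}$.

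The induction step is implied by the following lemma. Let $M$ be a closed non-orientable smooth $4$-manifold with $\pi_{1}(M)\cong \Z/2$ admitting a $Pin^{+}$-structure, and let $N$ be a manifold of the same kind satisfying $\widetilde{N}\cong S^{4}$ (for example $N\in\{\R P^{4},Q\}$). Then
\begin{equation*}
\widetilde{M\#_{S^{1}}N}\;\cong\;\widetilde{M}\,\#\,(S^{2}\times S^{2}).
\end{equation*}
To prove it I would decompose the circle sum as in Definition~\ref{Definition CS}, namely $M\#_{S^{1}}N=(M-\nu(\rho_{M}))\cup_{S^{2}\widetilde{\times} S^{1}}(N-\nu(\rho_{N}))$. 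Since each of $\rho_{M},\rho_{N}$ represents the non-trivial element of $\Z/2$, its lift to the orientation double cover is a simple closed curve whose tubular neighborhood is $D^{3}\times S^{1}$, the orientation double cover of $D^{3}\widetilde{\times} S^{1}$. Lifting the decomposition then yields
\begin{equation*}
\widetilde{M\#_{S^{1}}N}\;\cong\;\bigl(\widetilde{M}-(D^{3}\times S^{1})\bigr)\cup_{S^{2}\times S^{1}}\bigl(\widetilde{N}-(D^{3}\times S^{1})\bigr),
\end{equation*}
and the hypothesis $\widetilde{N}\cong S^{4}$ together with the standard splitting $S^{4}=(D^{2}\times S^{2})\cup_{S^{2}\times S^{1}}(D^{3}\times S^{1})$ identifies $\widetilde{N}-(D^{3}\times S^{1})\cong D^{2}\times S^{2}$. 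The right-hand side is therefore exactly the result of a surgery on the null-homotopic loop $\tilde{\rho}_{M}\subset\widetilde{M}$.

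The main obstacle is the framing of this surgery, which a priori produces either $\widetilde{M}\#(S^{2}\times S^{2})$ or $\widetilde{M}\#(S^{2}\widetilde{\times} S^{2})$. I would rule out the twisted case by a $Spin$-obstruction: the circle sum $M\#_{S^{1}}N$ carries a $Pin^{+}$-structure by Definition~\ref{Definition CS}, so its orientation double cover inherits a $Spin$-structure and in particular has $w_{2}=0$. Since $S^{2}\widetilde{\times} S^{2}\cong \mathbb{CP}^{2}\#\overline{\mathbb{CP}^{2}}$ has $w_{2}\neq 0$, any connected sum with it is non-Spin; the surgery therefore must yield $\widetilde{M}\#(S^{2}\times S^{2})$, as claimed. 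Inductively $\widetilde{M}$ remains Spin, so the argument propagates.

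Iterating the lemma with $M=W_{n-1}$ and $N=\R P^{4}$ gives $\widetilde{W_{n}}\cong \widetilde{W_{n-1}}\#(S^{2}\times S^{2})$, and combined with $\widetilde{W_{1}}=\widetilde{Q}\cong S^{4}$ this yields $\widetilde{W_{n}}\cong(n-1)(S^{2}\times S^{2})\#S^{4}$. The same induction, with each factor replaced (or not) by $Q$, handles circle sums of copies of $Q$ and hybrid sums, since the only properties of $N$ used are $\pi_{1}(N)\cong\Z/2$, the existence of a $Pin^{+}$-structure, and $\widetilde{N}\cong S^{4}$. This completes the sketch.
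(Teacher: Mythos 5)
Your proof is correct, and it follows a genuinely cleaner route than the paper's. The paper's argument is also by induction, but instead of your abstract lemma it manipulates explicit handle-type decompositions: it uses $S^4 \approx (D^3\times S^1)\cup(S^2\times D^2)$ for the base case and then, for the inductive step, works with the more elaborate presentation
$m(S^2\times S^2) \approx \bigl(S^1\times(S^3 - (m+1)D^3)\bigr)\cup\bigl(D^2\times (m+1)\cdot S^2\bigr)$,
checking directly that removing $\pi^{-1}(\nu(\rho))$ from it and gluing back an $S^2\times D^2$ adds one $S^2\times S^2$ summand. Your reformulation — lift the circle sum to the orientation double cover, identify $\widetilde{N}-\pi^{-1}(\nu(\rho_N))\cong S^2\times D^2$, and recognize the resulting gluing as a surgery on the null-homotopic loop $\tilde\rho_M$ — reaches the same decomposition but packages the induction in a single reusable lemma. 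The most substantive difference is that you explicitly confront the framing ambiguity of the surgery (which could a priori produce $\widetilde{M}\#(S^2\widetilde\times S^2)=\widetilde{M}\#\mathbb{CP}^2\#\overline{\mathbb{CP}^2}$) and rule out the twisted case by the $w_2=0$ obstruction coming from the $Pin^+$-structure in Definition~\ref{Definition CS}. The paper passes directly from $(S^2\times D^2)\cup(S^2\times D^2)$ to $S^2\times S^2$ without comment on the gluing map, so your $Spin$ argument closes a gap that the paper leaves implicit. Beyond that, the two proofs are the same at heart, and your version generalizes verbatim to the hybrid circle sums because, as you note, the only inputs are $\pi_1(N)\cong\Z/2$, the $Pin^+$-structure, and $\widetilde{N}\cong S^4$.
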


\begin{proof} It is proven in \cite{[G0]} that $\widetilde{M_0}$ is diffeomorphic to $S^4$. Suppose $n = 2$, and consider $\pi: \widetilde{M}_1\longrightarrow Q \#_{S^1} \R P^4$. Abuse notation and let $\rho$ be the nontrivial loop generating $\pi_1(Q)$ and $\pi_1(\R P^4)$, and denote by $\nu(\rho)$ its tubular neighborhood. The circle sum can then be written as\begin{equation}
(Q - \nu(\rho)) \cup (\R P^4 -\nu(\rho)).
\end{equation}

The symbol "$\approx$" denotes the existence of a diffeomorphism. Using the hypothesis that the smooth structure on the universal cover of $Q$ is the standard one, we have \begin{equation}{\label{Decomp 1}} \widetilde{M}_1 \approx (S^4 - \pi^{-1}(\nu(\rho))) \cup (S^4 - \pi^{-1}(\nu(\rho))),\end{equation} since the corresponding loops inside $S^4$ are isotopic. Notice that the same decomposition holds true if we take only copies of $Q$ in the circle sum. We are abusing notation, denoting any covering map by $\pi$. Decomposing the 4-sphere as\begin{equation}{\label{Decomp 2}}
S^4 \approx \partial D^5 \approx \partial (D^3\times D^2) \approx (D^3\times S^1) \cup (S^2\times D^2),
\end{equation}

and substituting in \ref{Decomp 1} we obtain \begin{equation}{\label{Decomp 3}} \widetilde{M}_1 \approx (S^2\times D^2) \cup (S^2\times D^2) \approx S^2\times S^2,\end{equation} as it was claimed.

Assume $n = 3$, and consider \begin{equation}{\label{Decomp 4}}
\widetilde{M}_2 \overset{\pi}{\longrightarrow} (Q\#_{S^1}\R P^4)\#_{S^1} \R P^4.
\end{equation}

Decompose the circle sum associatively as\begin{equation}{\label{Decomp 5}}
((Q\#_{S^1}\R P^4) - \nu(\rho)) \cup (\R P^4 - \nu(\rho)).
\end{equation}

Employing the hypothesis on the smooth structure on the universal cover of $Q$ and \ref{Decomp 3}, we have the decomposition\begin{equation}
\widetilde{M}_2 \approx (S^2\times S^2 - \pi^{-1}(\nu(\rho))) \cup (S^4 - \pi^{-1}(\nu(\rho))).
\end{equation}

Again, notice that the decomposition holds true regardless of the number of copies of $Q$ involved in the circle sum of \ref{Decomp 4}. Write\begin{equation}{\label{Decomp 5}}
(S^2\times S^2 - \pi^{-1}(\nu(\rho))) \cup (S^2\times D^2)
\end{equation}

as\begin{equation}{\label{Decomp 6}}
((S^1\times (S^3 - D^3 \sqcup D^3))\cup(D^2\times S^2 \sqcup S^2\times D^2) - \pi^{-1}(D(\rho))) \cup (S^2\times D^2) \approx \end{equation} \begin{equation} \approx (S^1\times (S^3 - D^3 \sqcup D^3 \sqcup D^3)) \cup (D^2\times (S^2\sqcup S^2 \sqcup S^2)) \approx \end{equation} \begin{equation} \approx (S^2\times S^2) \# (S^2\times S^2).
\end{equation}

The remaining cases follow from an iteration of the previous arguments to the decomposition\begin{equation}{\label{Decomp 6}}
m(S^2\times S^2) \approx (S^1\times (S^3 - \underbrace{D^3 \sqcup \cdots \sqcup D^3}_{m + 1})) \cup (D^2\times (\underbrace{S^2 \sqcup \cdots \sqcup S^2}_{m + 1})).
\end{equation}
\end{proof}

\subsection{Inequivalent involutions by infinite groups} We now complete the proof of Theorem \ref{Theorem Inv} in the following proposition.

\begin{proposition}{\label{Proposition Inv}} Let $k\in \N$. The connected sums \smallskip
\begin{enumerate}
\item $S^3\times S^1 \# 2k(S^2\times S^2)$,\smallskip\smallskip

\item $T^2\times S^2 \# 2k(S^2\times S^2)$, \smallskip\smallskip

\item $T^2\times T^2 \# 2k(S^2\times S^2)$, and \smallskip\smallskip

\item $S^3\times S^1 \# 2(k - 1)(S^2\times S^2)$\smallskip
\end{enumerate}

admit free orientation-reversing exotic $G$-involutions by the groups $\Z$, $\Z \rtimes \Z$, $\Z\times \Z \times \Z \rtimes \Z$, and $\Z/2\ast \Z/2$, respectively.

\end{proposition}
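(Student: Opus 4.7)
The plan is to realize each of the exotic \(G\)-involutions as the deck transformation on the orientation 2-cover of the corresponding exotic manifold constructed in Theorem \ref{Theorem M}, and to prove that this 2-cover is diffeomorphic to the standard smooth structure on the listed connected sum. Once this identification is in place, the fact that the quotient is not diffeomorphic to the standard manifold, whose deck transformation supplies the standard orientation-reversing involution, forces the resulting involution to be exotic in the sense of Definition \ref{Definition Inv}. Item (1) is essentially a restatement of a result already established in \cite{[FS1], [AO], [G0]}; items (2)--(4) require an extension of that analysis.

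The first step is to identify the orientation 2-cover topologically, which is a routine fundamental-group calculation. For item (1) the base is \(A_k\) with \(\pi_1 = \Z\) and the nontrivial orientation character, and the cover is homeomorphic to \(S^3\times S^1\#2k(S^2\times S^2)\). For items (2) and (3) the base is a circle sum \(X\#_{S^1}A_k\) with \(X\) one of \(Kb\times S^2\), \(\xi_3\), or \(Kb\times T^2\) connect-summed with copies of \(S^2\times S^2\), so \(\pi_1\) equals \(\pi_1(Kb) = \Z\rtimes\Z\) or \(\pi_1(Kb\times T^2) = (\Z\rtimes\Z)\times \Z^2\), and the 2-cover comes out to \(T^2\times S^2\#2k(S^2\times S^2)\) or \(T^2\times T^2\#2k(S^2\times S^2)\), respectively. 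For item (4), the base is \(X_1\#X_2\) with \(\pi_1 = \Z/2 \ast \Z/2\); the formula \(\widetilde{M_1\#M_2}\cong \widetilde{M_1}\#\widetilde{M_2}\#(S^3\times S^1)\) for the orientation 2-cover of a connected sum of two non-orientable 4-manifolds, specialized to \(X_1 = \R P^4\) and \(X_2 = \R P^4\#(k-1)(S^2\times S^2)\), yields \(S^3\times S^1\#2(k-1)(S^2\times S^2)\).

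The second and crucial step is to show that these 2-covers carry the standard smooth structure. The backbone is \cite{[G0]}, which shows that the universal cover of \(Q\) is diffeomorphic to \(S^4\). For item (1), \(A_k\) is obtained from \(Q\#k(S^2\times S^2)\) by blowing down an embedded \(\R P^2\), so its orientation 2-cover is obtained from \(S^4\#2k(S^2\times S^2) = 2k(S^2\times S^2)\) by 2-surgery on the \(S^2\) lift of this \(\R P^2\); identifying this lifted \(S^2\) as an unknotted 2-sphere makes the surgery produce the standard \(S^3\times S^1\#2k(S^2\times S^2)\). For items (2) and (3), a decomposition of the 2-cover of \(X\#_{S^1}A_k\) along the lifts of the loop used in the circle sum, analogous to equations \ref{Decomp 1}--\ref{Decomp 3} in the proof of Proposition \ref{Proposition S}, reduces the problem to item (1) together with the standard smooth structure on the orientation cover of the base \(X\). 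Item (4) reduces, via the formula above, to item (1), Proposition \ref{Proposition S}, and the observation that connected-summing an extra \(S^3\times S^1\) amounts to a stabilization that preserves standardness.

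The main obstacle is the explicit identification of the lifts, in the orientation 2-cover, of the \(\R P^2\) blown down to construct \(A\) and of the loops used in each circle sum, together with the verification that the resulting surgeries and gluings land on the standard smooth structure rather than on an a priori exotic one. This is handled by combining the handlebody of \(A\) from \cite{[AT]} with the \(\Z/2\)-equivariant handle picture of \(Q\) from \cite{[G0]}: one reads off that the lifted sphere is unknotted in \(S^4\#2k(S^2\times S^2)\), and that its 2-surgery yields the standard smooth structure on the claimed connected sum. Once item (1) is established in this way, items (2)--(4) follow by the same decomposition arguments as in Proposition \ref{Proposition S}, and the deck transformations then provide the required free orientation-reversing exotic \(G\)-involutions.
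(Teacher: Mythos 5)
Your proposal follows the same strategy as the paper: show the orientation 2-covers of the exotic manifolds from Theorem \ref{Theorem M} carry the standard smooth structure by reducing everything to Gompf's result that $\widetilde{Q}\cong S^4$, and then observe that the deck transformations provide the claimed exotic $G$-involutions. The individual reductions (reinterpreting the blowdown of $\R P^2$ in $Q\#k(S^2\times S^2)$ as a 2-surgery on the unknotted lifted sphere for item (1), lifting the circle sums through the decompositions of Proposition \ref{Proposition S} for items (2)--(3), and reducing item (4) to item (1) via the orientation-cover formula for connected sums) all match the paper's argument, with your treatment of item (4) being a slightly more explicit rendering of the paper's ``the techniques used to prove Item (1) show that this cover has the standard smooth structure.''
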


As it was mentioned in the Introduction, Item (1) has been previously proven in \cite{[FS1]}. Item (2) yields examples of exotic involutions on two homeomorphism classes of orbit spaces $\xi_3 \# k(S^2\times S^2)$ and $\xi_5 \# k(S^2\times S^2)$.
 
\begin{proof} We begin with the proof of Item (1). Let $A_k$ be the manifold that is homeomorphic but not diffeomorphic to $S^3\widetilde{\times} S^1 \# k(S^2\times S^2)$ of Theorem \ref{Theorem M} built in \cite{[AO]} for $k\in \N$. We claim that the smooth structure of its oriented 2-cover $\widetilde{A}_k$ is standard, i.e., it is diffeomorphic to $S^3\times S^1 \# 2k(S^2\times S^2)$. Consider the case $k = 1$. We have $A_1 = (Q\# S^2\times S^2 - \nu(\R P^2)) \cup (D^3 \widetilde{\times} S^1)$ \cite{[AO]}. Since the universal cover $\widetilde{Q}$ is diffeomorphic to $S^4$ \cite{[G0]} (see Convention \ref{Convention 1}), the oriented two-cover is \begin{equation}\widetilde{A}_1 = (S^2\times D^2\cup D^3\times S^1 \# 2(S^2\times S^2) - S^2\times D^2) \cup (D^3\times S^1).\end{equation}
The 4-sphere is being considered as the boundary $\partial D^5 = \partial (D^3\times D^2)$. Therefore, \begin{equation}\widetilde{A}_1 = D^3\times S^1 \# 2(S^2\times S^2) \cup D^3\times S^1 \cong S^3\times S^1\# 2(S^2\times S^2).\end{equation} The claim now follows by induction on $k$ by taking connected sums with copies of $S^2\times S^2$. Theorem \ref{Theorem M} implies that the orbit spaces are homeomorphic, but not diffeomorphic, yielding exotic $\Z$-involutions on the connected sum $S^3\times S^1 \# 2k (S^2\times S^2)$ for every $k\in \N$ (cf. \cite{[FS1]}).

Items (2) - (3) follow from Item (1), since the cut-and-paste construction on the orientation 2-covers associated to the circle sum has the standard smooths structure by Item (1).

Regarding Item (4), we have the following. The kernel of the diagonal group homomorphism $\Z/2\ast \Z/2 \rightarrow \Z/2$ is $\Z$. Therefore, there is an oriented two-cover for the manifolds constructed in Section \ref{Section Exotica} that has infinite cyclic fundamental group. The techniques used to prove Item (1) show that this cover has the standard smooth structure. Thus, 
$S^3\times S^1 \# 2(k - 1)(S^2\times S^2)$ admits an exotic $\Z/2 \ast \Z/2$-involution for every $k\in \N$.

\end{proof}

\begin{remark}{\label{Remark Orb}}\begin{itemize}\item Item (5) in Theorem \ref{Theorem Inv}  can be compared to \cite[Theorem 1.3]{[JK]}. A topological involution on $S^1\times S^3$ whose quotient is $\R P^4\# \R P^4$ is topologically conjugate to the involution $(t, x)\mapsto (\overline{t}, - x)$, where $\overline{t}$ is the complex conjugate of $t\in S^1$.

\item Examples of inequivalent $G \ast \Z/p_1 \ast \cdots \ast \Z/p_r$ - involutions: the $\eta$-invariant can be employed to distinguish the orbit spaces of involutions by the free product of groups $G \ast \Z/p_1 \ast \cdots \ast \Z/p_r$ using connected sums of manifolds constructed above and $\Q$-homology 4-spheres with fundamental group of odd order and vanishing $\omega_2$.



\end{itemize}
\end{remark}

\end{document}